\documentclass{amsart}

\usepackage{amsmath}
\usepackage{amssymb}
\numberwithin{equation}{section}


\usepackage{graphicx}

\usepackage{amscd}

\begingroup

\newtheorem{thm}{Theorem}[section]

\newtheorem{cor}{Corollary}[thm]

\newtheorem{defn}[thm]{Definition}

\newtheorem{rmk}[thm]{Remark}

\endgroup

\newcommand{\bpm}{\begin{pmatrix}}
\newcommand{\epm}{\end{pmatrix}}
\newcommand{\beq}{\begin{equation}}
\newcommand{\eeq}{\end{equation}}

\newcommand{\sddots}{\mathinner{\mkern1mu\raise1pt\hbox{.}\mkern2mu
\raise4pt\hbox{.}\mkern2mu\raise7pt\hbox{.}\mkern1mu}}

\newcommand{\psl}{PSL_2(\mathbb{Z})}
\numberwithin{figure}{section}
\newcommand{\calG}{\mathcal G}

\begin{document}

\title[Congruence Subgroups via Graphs on Surfaces]{Congruence and Noncongruence Subgroups of $\Gamma(2)$ via Graphs on Surfaces}

\author{erica j. Whitaker}
\address{Department of Mathematics, Otterbein University, Westerville OH 43081, USA}
\email{ewhitaker@otterbein.edu}

\maketitle
\begin{abstract} 
There is an established bijection between finite-index subgroups $\Gamma$ of $\Gamma(2)$ and bipartite graphs on surfaces, or, equivalently, certain triples of permutations. We utilize this relationship to study both congruence and noncongruence subgroups in terms of the corresponding graphs. We show some elementary criteria which can be used to identify many noncongruence subgroups. Given a graph on a surface, we have a method to produce generators for the corresponding group $\Gamma$ in terms of the generators of $\Gamma(2)$. Given generators for $\Gamma(2n)$, we show how to determine whether or not a graph of level $2n$ corresponds to a congruence subgroup.     
\end{abstract}

\section{Introduction}

Graphs on surfaces have a wide range of applications in mathematics. In particular, the notion of a bipartite graph in which we have a cyclic ordering at each vertex proves surprisingly powerful. There is a well-known correspondence between these graphs and finite-index subgroups $\Gamma$ of $\Gamma(2)$, which can be realized by considering how such groups act on the upper half-plane; this correspondence will be reviewed in Section \ref{background}.
The graphs are easy to describe and work with, even when the properties of the groups are not. One such property is that of congruence. Noncongruence subgroups of the modular group are of interest in number theory through the theory of modular forms and their connections with Galois representations. (See for example the papers of Atkin and Swinnerton-Dyer \cite{atkin}, Li, Long, and Yang \cite{li,li2} and Scholl  \cite{scholl}.) While much is known about congruence subgroups, since one can describe them in terms of congruences on the entries of the matrices, noncongruence subgroups are more mysterious. However, the correspondence between groups and graphs does not discriminate between congruence and noncongruence, so the graphs give a hands-on way to work with both. 

There are several existing tests and criteria for a subgroup of $\Gamma(1)$ to be congruence, such as those due to Hsu \cite{Hsu} and Larcher \cite{larcher}. In Section \ref{gamma1_drawings} we discuss converting these graphs, which are specific to subgroups of $\Gamma(2)$, into the type of graph that can be used to apply Hsu's algorithm. In Section \ref{larchers_results}
 we interpret and generalize some of Larcher's criteria in terms of graphs, and in doing so develop some powerful and almost immediate ways to identify many noncongruence subgroups. 

In Section \ref{relabel_graph} we introduce the notion of a $\Gamma(2)$-tiling. This tool will allow us to produce generators for $\Gamma$ corresponding to a specific graph in terms of the standard generators for $\Gamma(2)$. It will also allow us to give a method by which to determine if one group contains another, given the graph for the large group and generators for the smaller one in terms of the generators for $\Gamma(2)$, by determining if the corresponding graphs cover one another. Thus, given generators for $\Gamma(2n)$, we can determine whether or not a graph of level $2n$ corresponds to a congruence subgroup by determining if the graph for $\Gamma(2n)$ covers the graph for $\Gamma$.   

The tools developed in this paper are especially useful in producing examples of noncongruence subgroups. In a separate paper we will call on these ideas to produce infinite families of noncongruence subgroups of every even level on surfaces of genus 1, and (finite) families of noncongruence subgroups of every allowable even level on surfaces of genus 2.

\section{Background}
\label{background}

\subsection{Subgroups of $PSL_2(\mathbb{Z})$}
\label{subgroups}

We work within the group $PSL_2(\mathbb{Z}) = SL_2(\mathbb{Z})/\pm I$. When we use a matrix, it is always understood as an equivalence class in $PSL_2(\mathbb{Z})$. 

We use the following standard notation and terminology:
\[ \Gamma(n) = \left\{  \gamma \in \psl \mid \gamma \equiv \pm I \pmod{n} \right\}
\]
\[\Gamma_0(n) = \left\{  \gamma = \bpm a & b \\ c & d \epm \in \psl \Bigg| \ c \equiv 0 \pmod{n} \right\}
\]

$\Gamma(n)$ is called the {\em principal congruence subgroup of level n}. A subgroup $\Gamma \subset \psl$ is called {\em congruence} 
\label{level_group} if it contains $\Gamma(n)$ for some $n$. For a congruence subgroup $\Gamma$, we define the {\em level} of $\Gamma$ as the smallest $n$ such that $\Gamma(n) \subset \Gamma$. In particular, we are most concerned with finite-index subgroups of $\Gamma(2)$, which is freely generated by the elements $A = {1 \, 2\choose 0 \, 1}$ and $B ={1 \, 0\choose 2 \, 1}$ (see \cite{birch}). Throughout this paper $A$ and $B$ will always stand for exactly those matrices. 

We are working within $PSL_2(\mathbb{Z})$ instead of $SL_2(\mathbb{Z})$ because we are interested in these groups  acting on the upper half-plane as linear-fractional transformations. Our preferred fundamental domain for $\Gamma(2)$, $\mathcal{D}$, is given in Figure \ref{fig:domain_gamma2}. The dashed lines indicate that the arc from 0 to 1 and the arc from $-1$ to $\infty$ are not included, though in the future this won't be made explicit.
\begin{figure}[htb]
\centering
\includegraphics[width=.45 \textwidth]{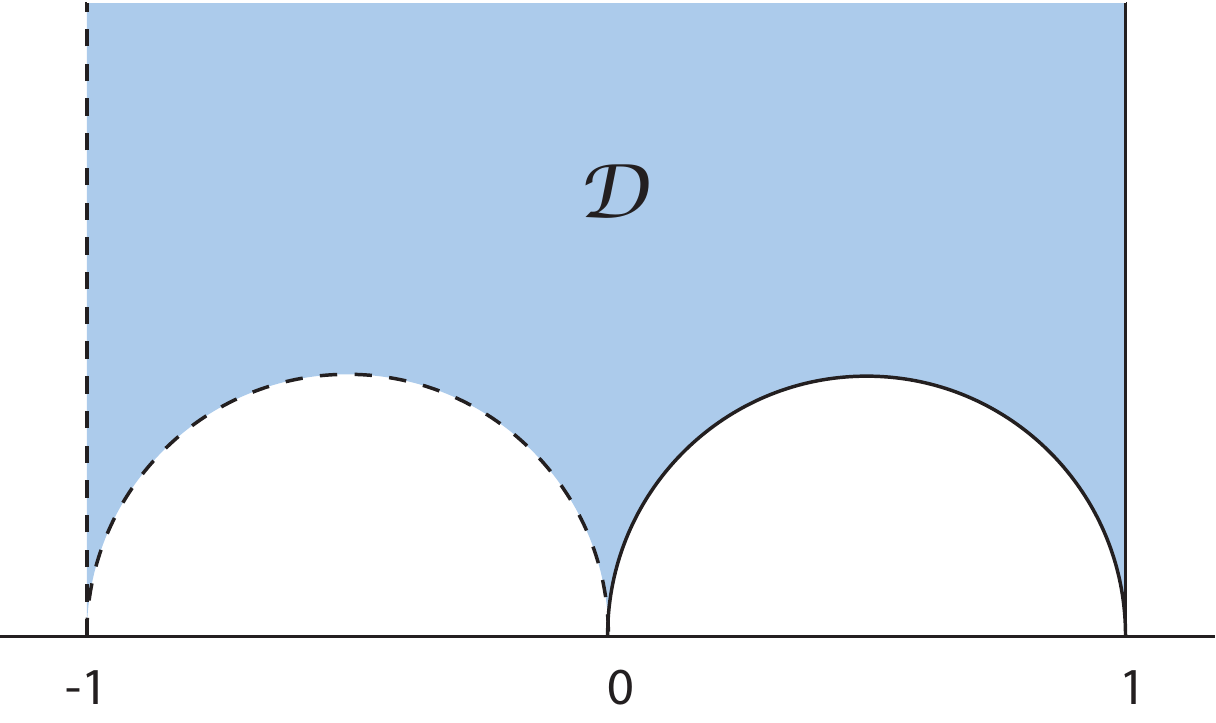}
\caption{Fundamental domain for $\Gamma(2)$}
\label{fig:domain_gamma2}
\end{figure}
 
Given a group $\Gamma \subset \Gamma(2)$ such that $[\Gamma(2):\Gamma]=n<\infty$, we can find a fundamental domain for $\Gamma$ consisting of $n$ copies of the domain for $\Gamma(2)$.  When we give such a domain we will usually show it as tiled by copies of $\mathcal{D}$. We label each tile by an element $\gamma \in \Gamma(2)$ written in terms of the generators $A$ and $B$. The tile labeled as $\gamma$ is the region of the upper half-plane consisting of the image of $\mathcal{D}$ under the matrix $\gamma$. The matrix $A$  acts on $\mathcal{D}$ by translating to the right, which is equivalent to rotating counterclockwise about the cusp $\infty$. If we rotate counterclockwise about an image of $\infty$, we find the adjacent tile by multiplying on the right by $A$. The matrix $B$ acts on $\mathcal{D}$ by rotating clockwise about the cusp 0. If we rotate clockwise about an image of 0, we find the adjacent tile by multiplying on the right by $B$.  Some sample tiles are labeled in Figure \ref{fig:tiles}. Because we are labeling the tiles with coset representatives, we will use $I$ instead of $\mathcal{D}$ for the original domain of $\Gamma(2)$.

\begin{figure}[htb]
\centering
\includegraphics[width=.99 \textwidth]{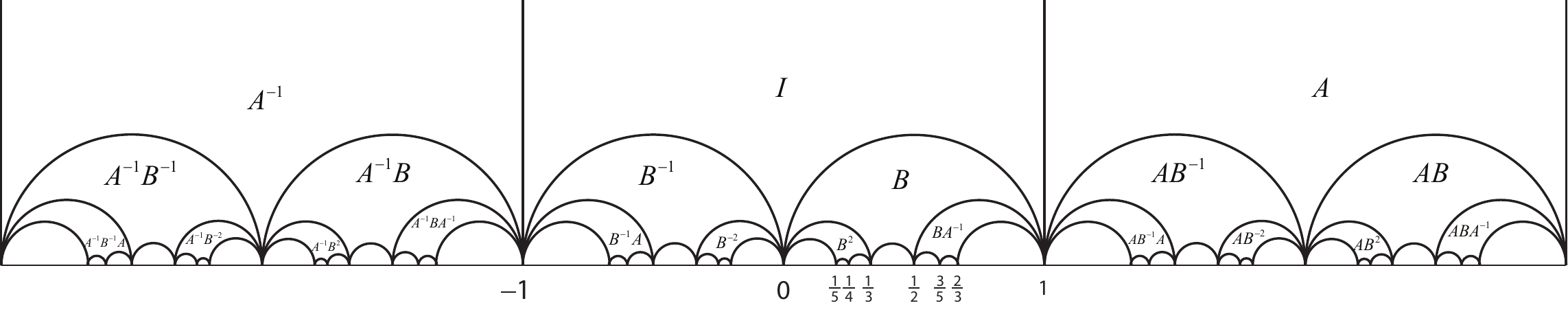}
\caption{Images of $\mathcal{D}$ under some elements of $\Gamma(2)$}
\label{fig:tiles}
\end{figure}

One of the challenges in viewing fundamental domains for groups of higher index is that the cusps become increasingly close together. To overcome this difficulty we will use the following approach: the $x-$values for the cusps will be shown equally spaced on the axis. This results in significant distortion of the regions, but the smallest regions are easier to see. Figure \ref{fig:tiling_rescaled} shows the above tiles displayed in this manner.

\begin{figure}[htb]
\centering
\includegraphics[width=.99 \textwidth]{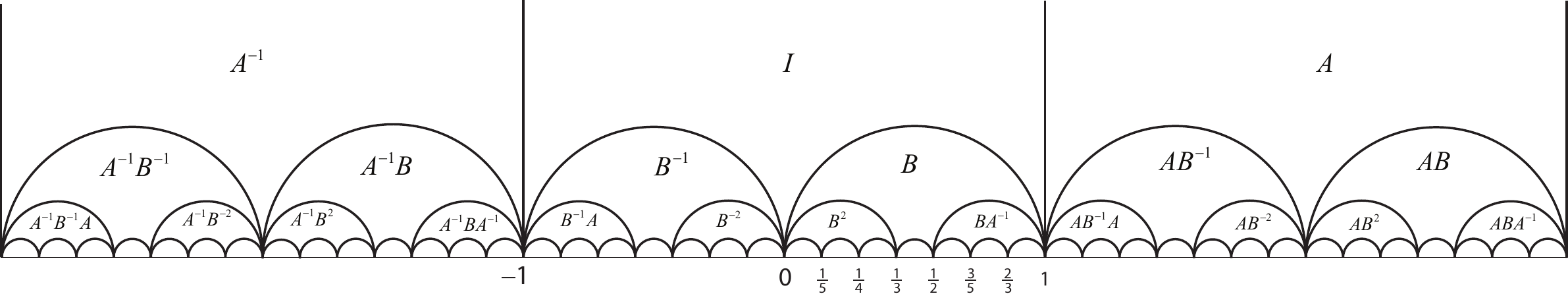}
\caption{Images of $\mathcal{D}$ under some elements of $\Gamma(2)$, rescaled}
\label{fig:tiling_rescaled}
\end{figure}

\subsection{Graphs}
\label{section_graphs}

We will be dealing with a special type of graph  which have been described and named in several equivalent ways. One of the most notable is Grothendieck's term {\em dessins d'enfants}, which he introduced in his {\em Sketch of a Program}, \cite{sketch}. Birch, in \cite{birch}, referred to them as {\em drawings}. In Lando and Zvonkin's text \cite{lando} they are called {\em maps} (or {\em hypermaps} depending on the exact object used). Because these will be the only type of graphs we are interested in, here they will be referred to simply (albeit imprecisely) as {\em graphs}. In this section we will examine these graphs and define other related terms. 

\begin{defn}\rm A {\em graph} $\mathcal G$  will mean a connected bipartite graph $G$ together with a cyclic ordering of the edges at each vertex.
\label{graph_with_ordering_definition}
\end{defn}

Two basic examples are pictured in Figure \ref{fig:first_graph_examples}. We will have a standard convention when labeling the edges: {\em from the viewpoint of a black vertex, the edge label will always lie on the left of the edge.}

 \label{graphs_a_and_b}
\begin{figure}[htb]
\centering
\includegraphics[width=.5 \textwidth]{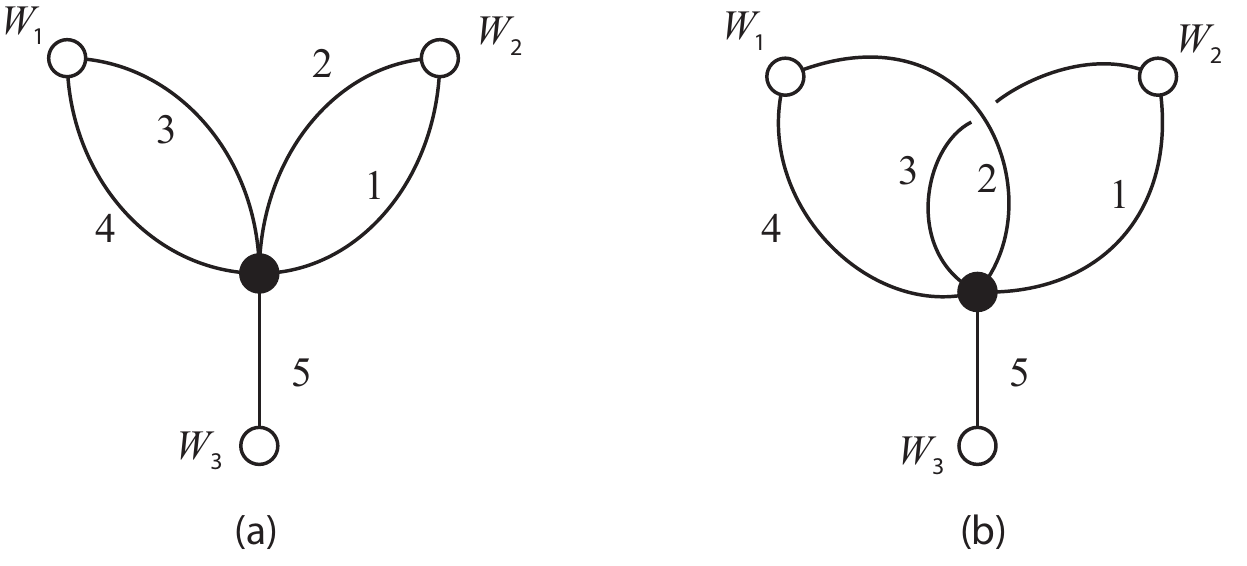}
\caption{Two distinct graphs, each with five edges}
\label{fig:first_graph_examples}
\end{figure}

Here is another way we can view the same objects. That these definitions are equivalent is a consequence of Theorem \ref{correspondence_theorem} below.

\begin{defn}
\label{define_graph_surface}
\rm By the term {\em graph}, we mean a pair $\mathcal G = \{G,\Sigma\}$;  where $G$  is a connected bipartite graph embedded in a compact orientable surface $\Sigma$, and such that the complement of the graph is a disjoint collection of 2-cells, called {\em faces}. We call $\Sigma$ the {\em underlying surface of $\mathcal G$}. If we remove the vertices and one point from the center of each face, we call the resulting $\Sigma '$ the {\em underlying punctured surface of $\mathcal G$}.
\end{defn}

Thinking in these terms, we consider graph (a) to be on a sphere. Graph (b) cannot be placed on a sphere without changing the ordering of the edges. Instead, it can viewed on a torus; see Figure \ref{fig:first_example_torus}. While graph (b) can be embedded on a surface of higher genus, in doing so we would not satisfy the condition that the complement of the graph be a disjoint collection of 2-cells. 
\begin{figure}[htb]
\centering
\includegraphics[width=.7 \textwidth]{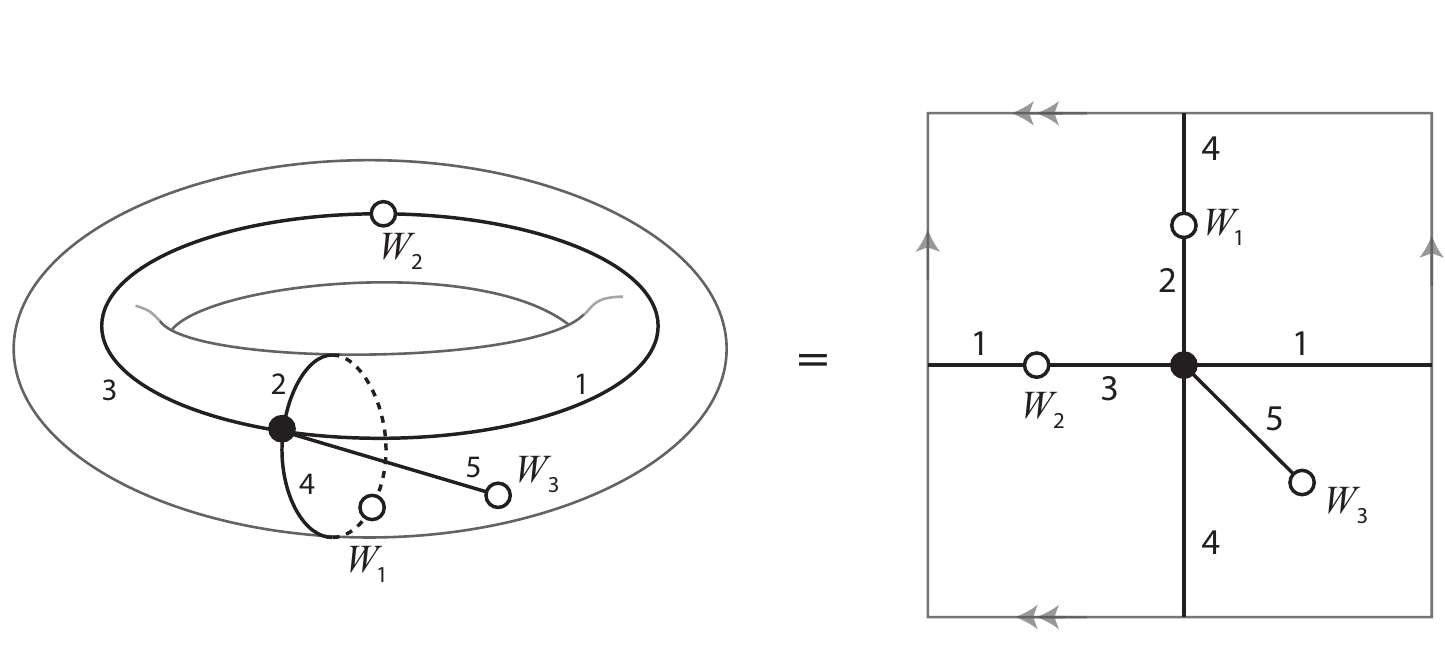}
\caption{Graph (b) viewed on a torus}
\label{fig:first_example_torus}
\end{figure}

We refer to the {\em degree} of a vertex as the number of edges attached to it. For the faces, we say a face {\em contains} an edge if the label for that edge is inside the face, and the {\em degree} of a face is the number of edges it contains. We also have an ordering for each face: when standing at a face center, we rotate counterclockwise and record the edges.

We can use these degrees to define another useful term. 
\label{level_graph}
The {\em level} of a graph is twice the least common multiple of the degrees of all vertices and faces in the graph. Graph (a) has level $2\cdot lcm(1,2,3,5) = 60$, while graph (b) has level $2\cdot lcm(1,2,5) = 20$. (This is related to the term {\em level} we defined in Section \ref{subgroups}, as we will see in Section \ref{cusp_widths}.)

For large graphs it isn't always practical to draw the pictures. Instead, it is enough to keep track of the vertices and the orderings of the edges at each. We can write these orderings as cycles in $S_n$ where $n$ the number of edges in the graph. This leads to another version of our definition, also equivalent because of Theorem \ref{correspondence_theorem}.

\begin{defn}\rm By the term {\em graph}, we mean a pair of permutations in $S_n$: $\sigma$, in which each cycle corresponds to the cyclic ordering of the edges at a black vertex, and $\alpha$, in which each cycle corresponds to the ordering of the edges at a white vertex. (Note that the trivial cycles of length 1 also correspond to vertices.) In order for the graph to be connected, we require that the group generated by $\sigma$ and $\alpha$ be transitive on the $n$ edges. 
\label{permutation_definition}
\end{defn}
In graph (a), at the black vertex we see the permutation $(1,2,3,4,5)$, while the white vertices are represented by $(1,2)(3,4)(5)$. In graph (b) we have the same permutation for the black vertex, but the white vertices yield the permutation $(1,3)(2,4)(5)$. Note that a cycle in the product $\sigma \cdot \alpha$ gives the inverse of the edges we see when rotating counterclockwise within a face. This is shown in Proposition 1.3.16 in \cite{lando}. Thus, we could specify a graph by giving any two of the permutations for the black vertices, white vertices and faces.

\subsection{The Correspondence}
\label{correspondence}

Having introduced both finite-index subgroups of $\Gamma(2)$ and the graphs, we are now ready to understand the correspondence between them. This is found in many places in the literature; here we restate Theorem 1 from Birch \cite{birch}. 

\begin{thm}[(\cite{birch})] For each positive integer $n$, the following families of objects are in $1-1$ correspondence:
\label{correspondence_theorem}
\begin{enumerate}
\item Triples $(\mathcal{R}, \phi, O)$ where $\mathcal{R}$ is an $n-$sheeted Riemann surface, $\phi: \mathcal{R} \rightarrow \overline{\mathbb{C}} = \mathbb{C} \cup \{\infty\}$ is a covering map branched at most above $\{\infty, 0, 1\}$, and $O$ is a point of $\mathcal{R}$ above $\infty$.
\item Quadruples $(\beta, \sigma, \alpha; \star)$ where $\beta$, $\sigma$ and $\alpha$ are permutations of $S_n$ such that $\beta \sigma \alpha = id$ and such that the group generated by $\sigma$, $\alpha$ is transitive on the symbols permuted by $S_n$, and $\star$ is a marked cycle of $\beta$; all modulo equivalence corresponding to simultaneous conjugation by an element of $S_n$.
\item Subgroups $\Gamma \subset \Gamma(2)$ of index $n$, modulo conjugacy by translation.
\item Drawings with $n$ edges.
\end{enumerate}
\end{thm}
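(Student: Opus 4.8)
The plan is to prove the theorem as a cycle of bijections, establishing $(1)\leftrightarrow(2)$, $(2)\leftrightarrow(3)$ and $(2)\leftrightarrow(4)$, using the monodromy description of covering spaces, the Riemann existence theorem, and a combinatorial dictionary for drawings. The analytic backbone is the modular $\lambda$-function, which identifies $\mathbb{H}/\Gamma(2)$ with $\overline{\mathbb{C}}\setminus\{0,1,\infty\}$ and extends to an isomorphism of the compactified modular curve $X(2)$ with $\overline{\mathbb{C}}$, sending the three cusps of $\Gamma(2)$ to $\{0,1,\infty\}$. In particular $\Gamma(2)\cong \pi_1(\overline{\mathbb{C}}\setminus\{0,1,\infty\})$, which is free of rank two, consistent with the fact used above that $\Gamma(2)$ is freely generated by $A$ and $B$. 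I would fix a base point and small loops $x_0, x_1, x_\infty$ around the three punctures satisfying $x_\infty x_0 x_1 = 1$.

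For $(1)\leftrightarrow(2)$ I would use monodromy together with Riemann existence. Given a triple $(\mathcal{R},\phi,O)$, restricting $\phi$ over $\overline{\mathbb{C}}\setminus\{0,1,\infty\}$ yields an $n$-sheeted connected unbranched cover, hence a transitive action of $\pi_1$ on the fiber, i.e.\ permutations $\beta,\sigma,\alpha\in S_n$ (the monodromies of $x_\infty, x_0, x_1$) with $\beta\sigma\alpha=\mathrm{id}$ and $\langle\sigma,\alpha\rangle$ transitive; the sheet containing $O$ lies in one cycle of $\beta$, the marked cycle $\star$, and a change of numbering of the fiber is simultaneous conjugation in $S_n$. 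Conversely, a quadruple $(\beta,\sigma,\alpha;\star)$ determines a connected topological branched cover of $\overline{\mathbb{C}}$, and the Riemann existence theorem equips it with a unique complex structure making $\phi$ holomorphic and branched at most over $\{0,1,\infty\}$, with $O$ taken in the sheet distinguished by $\star$. One then checks these constructions are mutually inverse modulo the stated equivalences.

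For $(2)\leftrightarrow(3)$ I would invoke the standard correspondence between index-$n$ subgroups of a group $G$ and transitive $G$-sets of size $n$ with a marked point, applied to $G=\pi_1(\overline{\mathbb{C}}\setminus\{0,1,\infty\})\cong\Gamma(2)$: a subgroup $\Gamma$ of index $n$ acts on its coset space, producing the permutations $\sigma,\alpha$, and the marked point of the $G$-set corresponds on the group side to $\Gamma$ itself; remembering $\Gamma$ only up to conjugacy by translations — that is, by the cyclic subgroup fixing the cusp above $\infty$, generated by the image of $x_\infty$ — corresponds exactly to remembering only the cycle of $\beta$ through the marked point, namely $\star$. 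Finally $(2)\leftrightarrow(4)$ is combinatorial: from $(\beta,\sigma,\alpha;\star)$ one builds a drawing with $n$ labeled edges whose black vertices are the cycles of $\sigma$ (with their induced cyclic orderings), whose white vertices are the cycles of $\alpha$, and whose faces are the cycles of $\beta=(\sigma\alpha)^{-1}$ (Proposition 1.3.16 of \cite{lando}); gluing a disk along each face-cycle gives a closed orientable surface whose genus is forced by $V-E+F=2-2g$. Connectedness of the graph is transitivity of $\langle\sigma,\alpha\rangle$, and $\star$ marks a face compatibly with the marked cusp; reading cyclic orders at vertices of a drawing recovers $\sigma,\alpha$, so the two operations are inverse, and relabeling edges is simultaneous conjugation.

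The main obstacle is bookkeeping rather than depth: keeping the three flavors of marked data — the point $O$ above $\infty$ in (1), the marked cycle $\star$ in (2), and the passage to subgroups only up to conjugacy by translation in (3) — in exact correspondence, and verifying that each reduction is by precisely the claimed equivalence and nothing coarser. The one genuinely non-elementary ingredient is the Riemann existence theorem, needed to promote a topological branched cover of $\overline{\mathbb{C}}$ to an actual Riemann surface in part (1); everything else is covering-space theory and explicit combinatorics.
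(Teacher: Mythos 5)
The paper does not actually prove this theorem: it is quoted from Birch \cite{birch}, so there is no internal proof to compare against. Your outline is the standard argument and, as far as I can tell, essentially Birch's own: the $\lambda$-function identifies $X(2)$ with $\overline{\mathbb{C}}$ sending the cusps to $\{0,1,\infty\}$, monodromy plus the Riemann existence theorem gives $(1)\leftrightarrow(2)$, the coset action of $\Gamma(2)\cong\pi_1(\overline{\mathbb{C}}\setminus\{0,1,\infty\})$ gives $(2)\leftrightarrow(3)$, and the permutation dictionary gives $(2)\leftrightarrow(4)$. The only point worth tightening is that a point $O$ above $\infty$ is a ramification point of the compactified cover and so corresponds directly to a cycle of $\beta$ (the sheets meeting at $O$), not to a single sheet whose cycle you then take; this is precisely why the marked datum in $(2)$ is a cycle and why $(3)$ remembers the subgroup only up to conjugation by the translation stabilizing that cusp.
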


Item 2 corresponds with our Definition \ref{permutation_definition}: given $\sigma$ and $\alpha$, we use the relation $\beta \sigma \alpha = id$ to compute the permutation $\beta^{-1}$ of the faces. Then we mark one cycle (i.e., one face); marking a different face amounts to``simultaneous conjugation by an element of $S_n$.'' By ``drawings'' in item 4, he is referring to our Definition \ref{graph_with_ordering_definition}; thus this theorem verifies that these definitions are equivalent.  

\subsection{From a group to a graph}
\label{group_to_graph}

To understand the relationship between the finite-index subgroups of $\Gamma(2)$ and graphs we look first at the domain $\mathcal{D}$ for $\Gamma(2)$ given in Figure \ref{fig:domain_gamma2}. The sides of the domain are identified by the elements $A$ and $B$ of $\Gamma(2)$ to form a sphere with three points removed: the cusps at $1=-1$, 0, and $\infty$. Next we will ``fill the holes'':  at the cusp 0, we add a black vertex; we fill the cusp at 1 with a white vertex, and replace the cusp at infinity with a $*$ to represent a face center, so that we now have a sphere with three marked points. The arc from 0 to 1 will represent an edge; we use dashed lines between white vertices and corresponding face centers. In this way we can identify the group $\Gamma(2)$ with the graph on a sphere consisting of one black vertex, one white vertex, one edge, and one face. See Figure \ref{fig:domain_gamma2_graph}. 
\begin{figure}[htb]
\centering
\includegraphics[width=.7 \textwidth]{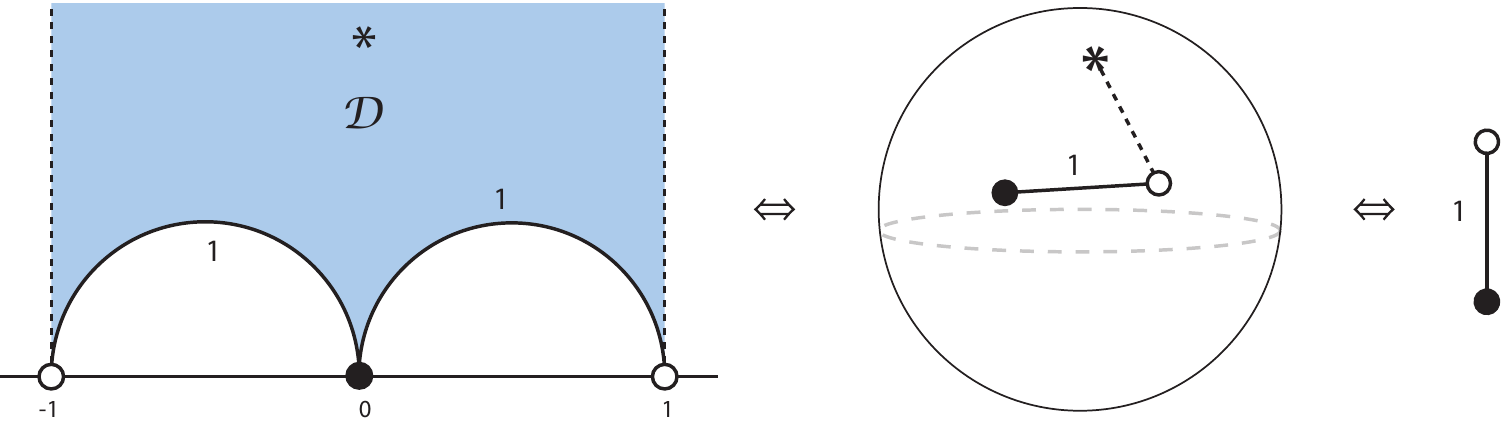}
\caption{$\Gamma(2)$ as a graph}
\label{fig:domain_gamma2_graph}
\end{figure}

Now consider $\Gamma =\Gamma_0(6) \cap \Gamma(2)$. In Figure \ref{fig:domain_gamma06_pairs}, we see a fundamental domain for $\Gamma$. We label 0 and its images under $B$, $B^2$, and $BA^{-1}$ as black vertices; 1 and $-1$ and their images as white vertices, and $\infty$ and its images as face centers with a $*$. The images of the arc from 0 to 1 will be the edges of our graph. We glue the domain to form a surface by finding side-pairing transformations. For example, the sides marked $x_1$ are identified by the element $B^2A^2B^{-1}$ of $\Gamma$.
\begin{figure}[htb]
\centering
\includegraphics[width=.6 \textwidth]{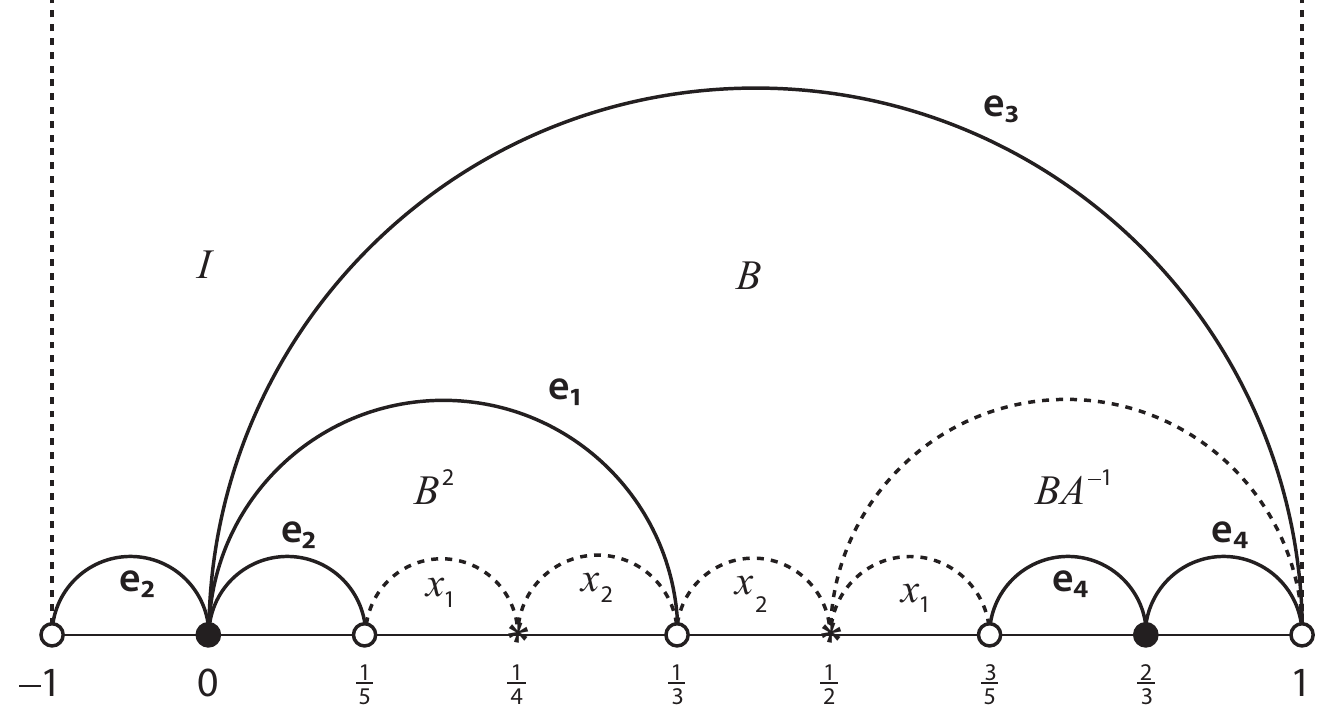}
\caption{Domain for $\Gamma_0(6) \cap \Gamma(2)$ with side pairings}
\label{fig:domain_gamma06_pairs}
\end{figure}

The side-pairing transformations allow us to read off the permutations associated to our graph. We rotate counterclockwise around the black vertices to find $\sigma = (3,2,1)(4)$, the white vertices to find $\alpha = (1)(2, 3, 4)$, and the faces to find $\beta = (1,2,4)(3)$. The Euler characteristic tells us the graph belongs on a sphere; it is pictured in Figure \ref{fig:gamma06_graph}. Recall from Theorem \ref{correspondence_theorem} that we require a marked face $\star$ for our graph; by convention, we always mark the face which has its face center at the cusp $\infty$. 

\begin{figure}[htb]
\centering
\includegraphics[width=.4 \textwidth]{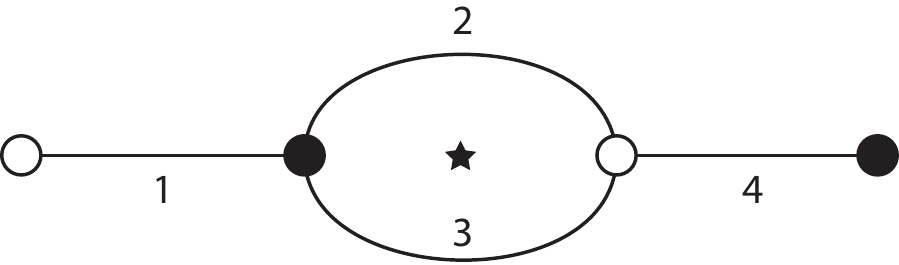}
\caption{Graph associated to $\Gamma_0(6) \cap \Gamma(2)$}
\label{fig:gamma06_graph}
\end{figure}

\subsection{From a graph to a group}
\label{graph_to_group}
Next we will consider the other direction: given a graph, how do we find the corresponding subgroup of $\Gamma(2)$? We can first find a fundamental domain which corresponds to the graph. Then we find the side-pairing transformations, which generate the group. (See Ford \cite{ford}, Theorem 19 in Section 28 or Theorem 10 in Section 32).

For examples we look at the graphs in Figure \ref{fig:first_graph_examples}. First consider graph (b), which has a domain shown in Figure \ref{fig:domain_graph_b}.  The graph has five edges, so a domain uses five copies of $\mathcal{D}$. We can choose $\infty$ to be the center of the only face and the tiles $I$, $A$, $A^2$, $A^3$ and $A^4$ as our domain. We label the edges inside the region according to the cycle for the face, $(1,2,3,5,4)$. The cycle for the black vertex, $(1,2,3,4,5)$ determines our edge pairings. We can verify that with these pairings the cusps at 0, 2, 4 and 8 are all identified to form the black vertex in graph (b), and that rotating around the white vertices we see the cycles $(1,3)(2,4)(5)$.

\begin{figure}[htb]
\centering
\includegraphics[width=.9 \textwidth]{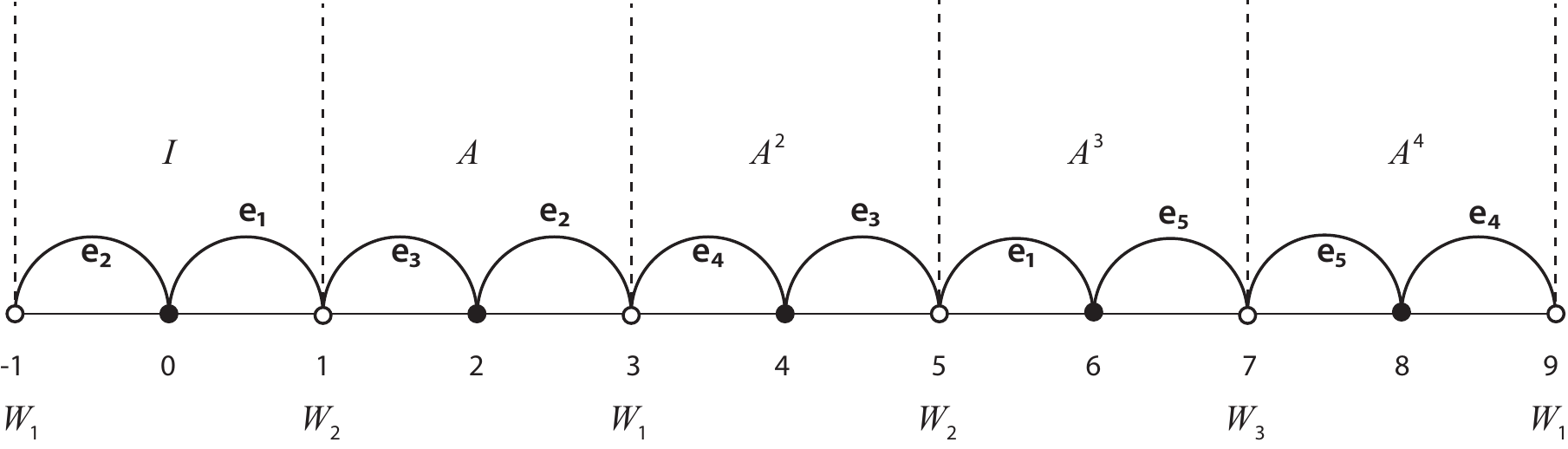}
\caption{Domain for graph (b) from Figure \ref{fig:first_graph_examples}}
\label{fig:domain_graph_b}
\end{figure}  
 
The side-pairing transformations give us generators of the corresponding group $\Gamma_b$. We have six generators: one for each edge, and also $A^5$, which identifies the vertical arcs. For example, for edge 3, we rotate counterclockwise about the black vertex in $A$ to arrive at $A^2$, so the generator $g_3$ satisfies $g_3AB^{-1}=A^2$; thus $g_3 = A^2BA^{-1}$. Similarly we compute the others to get the group 
$$
\Gamma_b = \left<BA^{-3}, \ AB, \ A^2BA^{-1}, \ A^4BA^{-2}, \ A^3BA^{-4}, \ A^5 \right>.
$$

The process for graph (a) is similar. We can choose the degree 3 face $(2,5,4)$ to have its face center at $\infty$, and label the tiles $I$, $A$ and $A^2$ with edges 2, 5, and 4 respectively. We identify the vertical arcs from $-1$ to $\infty$ and 5 to $\infty$, so the element $A^3$ is in the group $\Gamma_a$. 

Now rotate counterclockwise about the black vertex at 0, which has the cycle $(1,2,3,4,5)$. Rotating counterclockwise through edge 2 we expect edge 3, which does not yet appear in our domain, so we add the tile $B^{-1}$. After edge 3 we expect edge 4, so we identify the arc from 0 to $\tfrac{1}{3}$ with the arc from 4 to 5, which adjoins the tile $A^2$. We continue to pair edges and solve for the corresponding generators to get the group
$$
\Gamma_a = \left<A^3, \ B^2A^{-1}, \ A^2B^2, \ ABA^{-2}, \ B^{-1}A^{-1}B, \ AB^{-1}A^{-1}BA^{-1} \right>
$$

\begin{figure}[htb]
\centering
\includegraphics[width=.8 \textwidth]{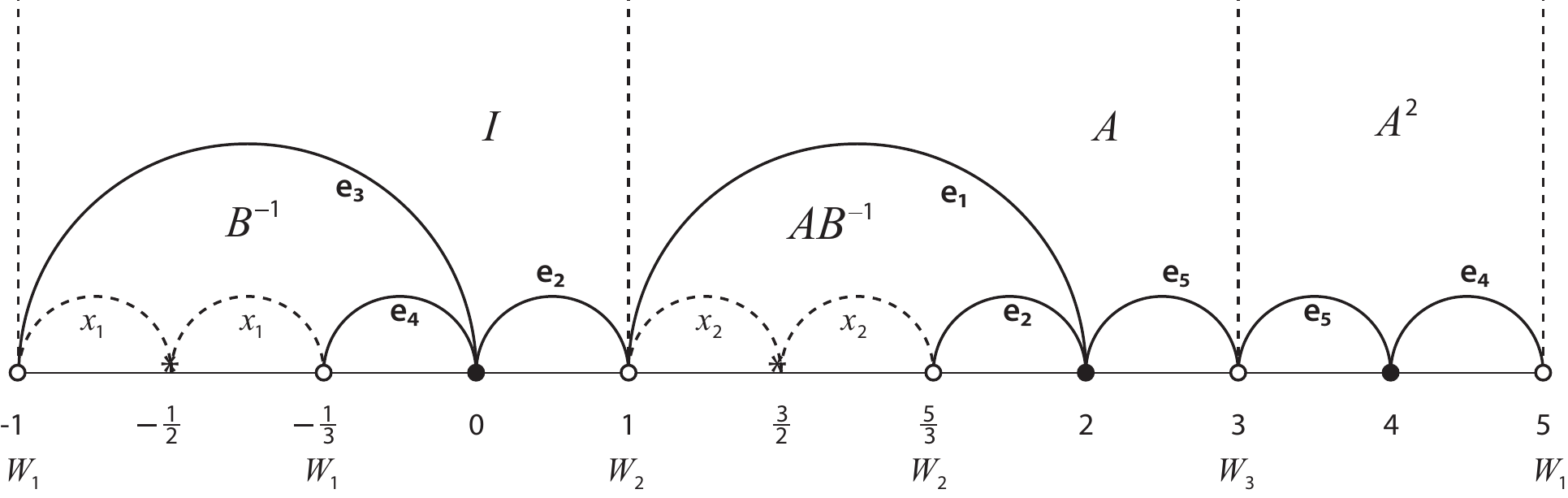}
\caption{Domain for graph (a) from Example \ref{graphs_a_and_b}}
\label{fig:domain_graph_a}
\end{figure}  

The correspondence inspires the following terminology:
\begin{defn}
\label{define_graph_domain}
\rm By a {\em (fundamental) domain of a graph}, we mean a connected fundamental domain of the corresponding group $\Gamma \subset \Gamma(2)$, tiled by copies of $\mathcal{D}$, and with the images of the arc from 0 to 1 labeled according to the edges of the graph. We label the images of 0 as black vertices, the images of 1 and $-1$ as white vertices, and images of $\infty$ as $*$ to represent a face center; when identified these form the black vertices, white vertices and face centers of the given graph.
\end{defn}

\begin{rmk}
\rm Notice that the surface formed by gluing the fundamental domain is the underlying punctured surface $\Sigma '$ of $\mathcal G$, where the punctures are the cusps in the domain. When the cusps are filled with vertices and faces centers as labeled above, we form the underlying compact surface $\Sigma$ of $\mathcal G$.
\end{rmk}

\subsection{Wohlfahrt's Theorem}
\label{cusp_widths}
We have defined the level of a graph as twice the least common multiple of the degrees of all vertices and faces, and we defined the level of a congruence subgroup $\Gamma \subset \Gamma(2)$ as the smallest $n$ such that $\Gamma(n) \subset \Gamma$. In fact, due to a result of Wohlfahrt,  for a congruence subgroup these definitions agree. 

In \cite{W}, Wohlfahrt defines the level of a finite-index subgroup $\Gamma \subset \Gamma(1)$ as the least common multiple of the cusp widths (or amplitudes) for $\Gamma$. For the group $\Gamma(2)$ the cusp width  of $\infty$ is 2, which is the width of its fundamental domain at $\infty$; the degree of the face in the graph for $\Gamma(2)$ is 1.
In the graph for a group $\Gamma \subset \Gamma(2)$, a vertex or face of degree $d$ will touch $d$ copies of the domain for $\Gamma(2)$, and thus have cusp width $2d$. The least common multiple of the cusp widths is the least common multiple of twice each degree, and thus twice the least common multiple of the degrees.

Wohlfahrt then proves that for a group $\Gamma$ of level $n$ in his sense, $\Gamma$ is congruence if and only if $\Gamma$ contains $\Gamma(n)$, and so for congruence subgroups the two definitions agree. Thus, in order to check if a group of level $n$ is congruence, we need check only whether it contains $\Gamma(n)$ for this particular value of $n$.

\section{Drawings for subgroups of $\psl$}
\label{gamma1_drawings}
The idea of looking at subgroups of $\psl$ in terms of drawings is not new. However, for the most part these versions of drawings differ from the graphs we have used. This arises from the fact that others are looking at subgroups of $\Gamma(1) = \psl$ which are not necessarily in $\Gamma(2)$. In this section we explain the relationship between the two types of drawings and the advantages of working with each.

In Figure \ref{fig:domain_gamma2_graph} we showed how $\mathcal{D}$, the fundamental domain of $\Gamma(2)$, can be interpreted as a graph: the arc from 0 to 1 represents an edge, and the cusps at 0, 1 and $\infty$ represent a black vertex, white vertex, and face center, respectively. For subgroups of $\Gamma(2)$, the domains consisting of $n$ copies of $\mathcal{D}$ correspond to graphs with $n$ edges.

In contrast, a standard fundamental domain and corresponding graph for $\Gamma(1)$ is pictured in Figure \ref{fig:gamma1_graph}. We see  only one cusp (at $\infty$). It also has two {\em marked points}, that at $i$, which has an order 2 stabilizer in $\psl$, and $\rho$, whose stabilizer in $\psl$ has order 3. The natural way to associate a graph to this group is to mark the point $\rho$ with a black vertex, $i$ with a white vertex, and continue to mark the cusp $\infty$ with a $*$ to represent a face center. 
\begin{figure}[htb]
\centering
\includegraphics[width=.6 \textwidth]{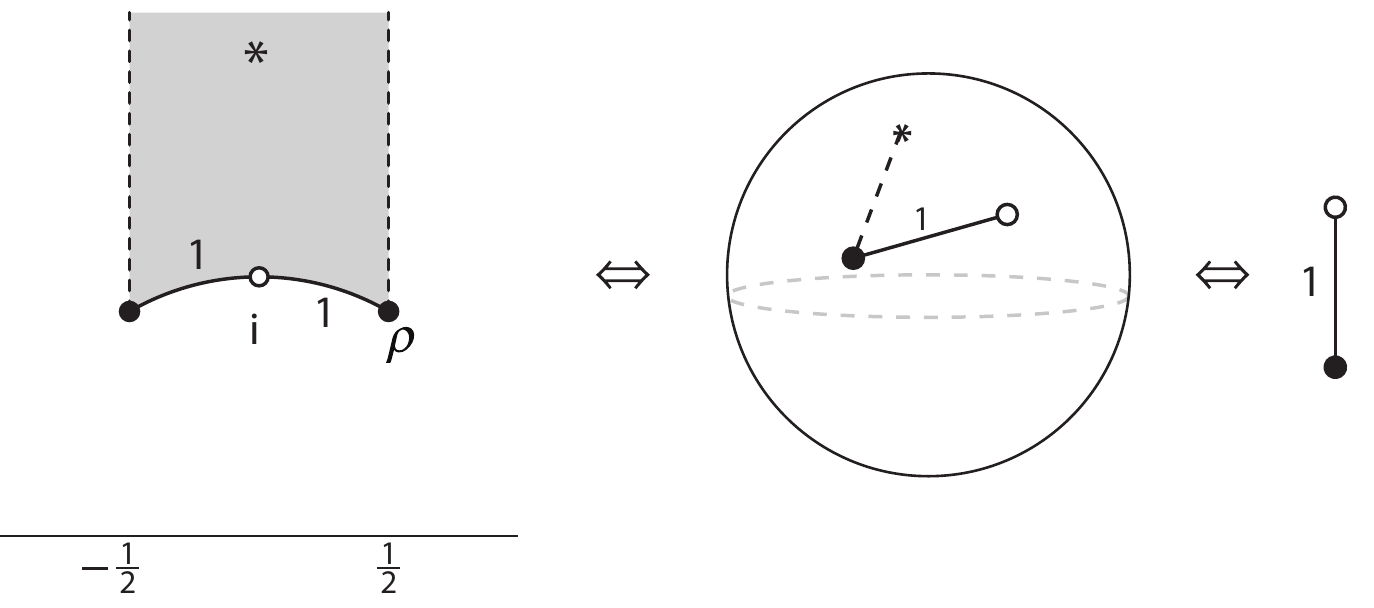}
\caption{Fundamental domain for $\psl$ as a $\Gamma(1)$-graph}
\label{fig:gamma1_graph}
\end{figure}  

Because $[\Gamma(1):\Gamma(2)]=6$, we find a new way to draw a graph for $\Gamma(2)$: we lift the graph for $\Gamma(1)$ to the tiling of a fundamental domain for $\Gamma(2)$ by six copies of our domain for $\Gamma(1)$ as in Figure \ref{fig:gamma2_by_gamma1_graph}. We can extend this for general graphs. For this section we will distinguish these by referring to the {\em $\Gamma(1)$-graph} and the {\em $\Gamma(2)$-graph}, respectively, associated to group $\Gamma$. Notice that in the  $\Gamma(1)$-graph the black vertices all have degree either 1 or 3, and the white vertices have degree 1 or 2. 
\begin{figure}[htb]
\centering
\includegraphics[width=.7 \textwidth]{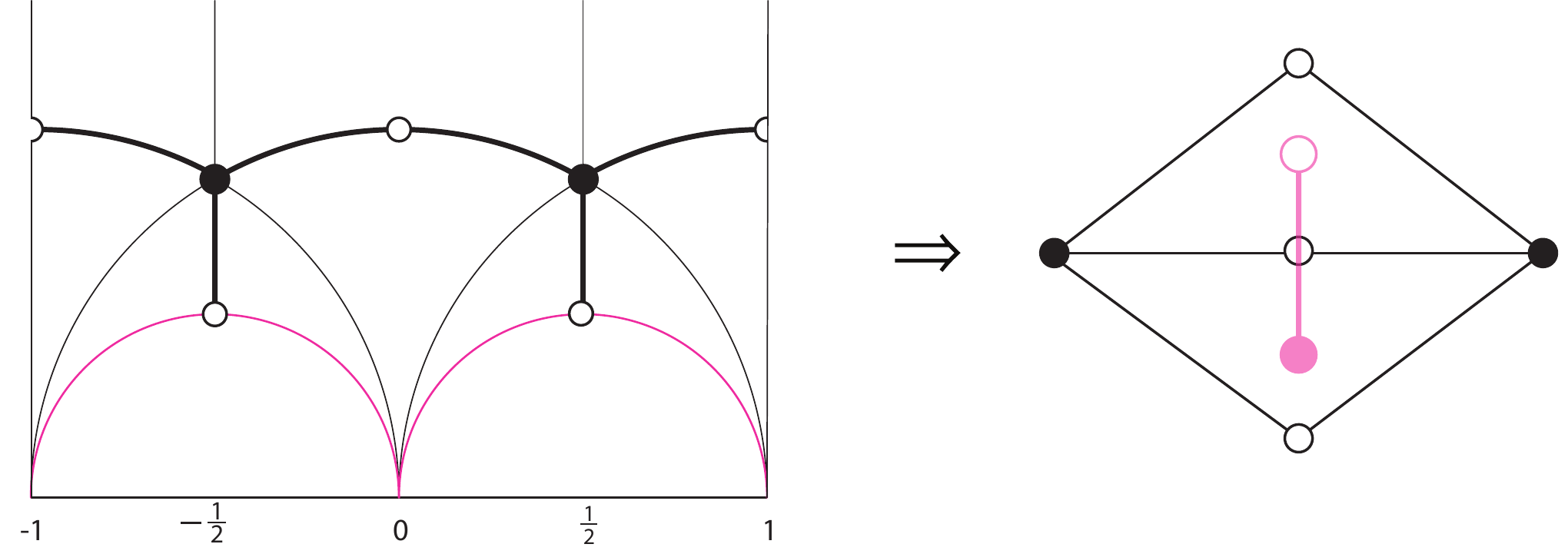}
\caption{Graph for $\Gamma(2)$ as a subgroup of $\psl$}
\label{fig:gamma2_by_gamma1_graph}
\end{figure}  

By computing a few examples one can begin to find the pattern in converting from a $\Gamma(2)$-graph to a $\Gamma(1)$-graph.  In the domains for the graphs for $\Gamma(2)$ the vertices and face centers are all labeled at cusps, so when converting, every black vertex, white vertex and face center will appear as the center of a face on the new graph. As a way to get started, we can notice in Figure \ref{fig:gamma2_by_gamma1_graph} that the original $\Gamma(2)$ edge, i.e., the arc from 0 to 1, now has a white vertex in its center which is connected to two black vertices; so for each edge in our $\Gamma(2)$-graph we can add a white vertex on top of it and two black vertices to the sides. We see some examples of this conversion in Figures \ref{fig:3_star_graphs} and \ref{fig:fish_becomes_spaceship}. 
\begin{figure}[htb]
\centering
\includegraphics[width=.8 \textwidth]{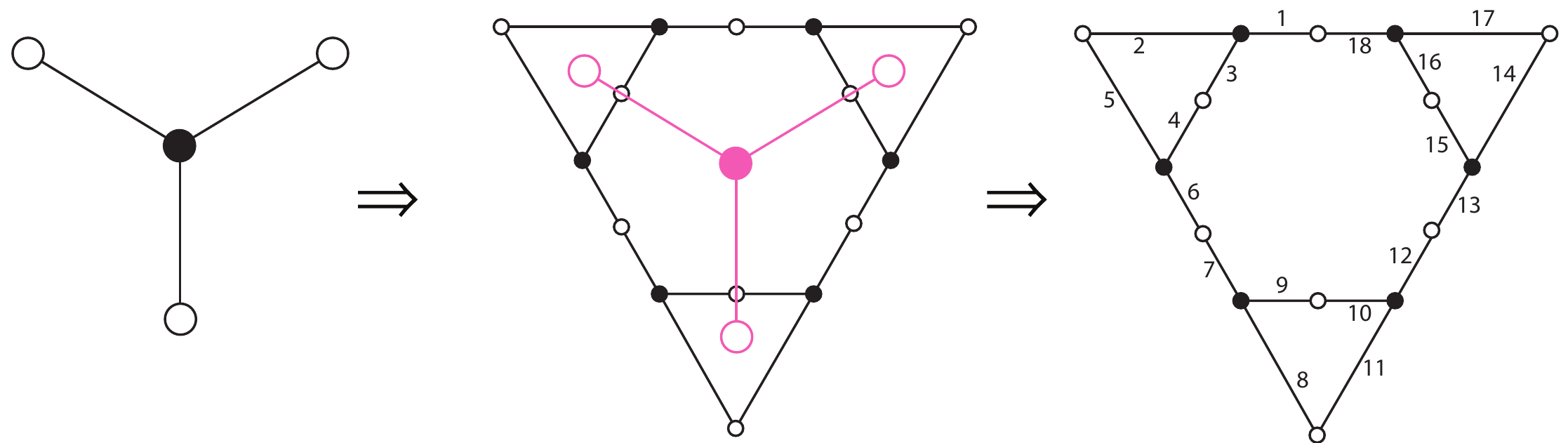}
\caption{Converting the 3-star from a $\Gamma(2)$-graph to a $\Gamma(1)$-graph}
\label{fig:3_star_graphs}
\end{figure}  
\begin{figure}[htb]
\centering
\includegraphics[width=.8 \textwidth]{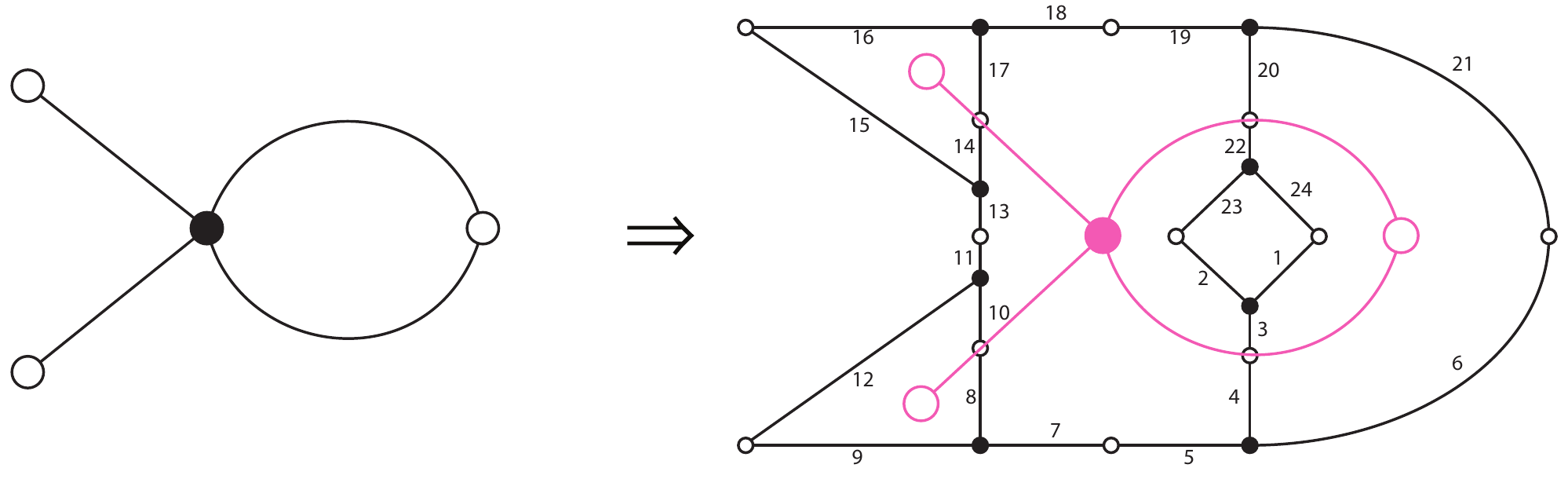}
\caption{A fish in $\Gamma(2)$ becomes a spaceship in $\Gamma(1)$.}
\label{fig:fish_becomes_spaceship}
\end{figure}

The advantage to converting our $\Gamma(2)$-graphs to their $\Gamma(1)$ versions is that there are many results available for subgroups of $\Gamma(1)$. One example of such a result is an algorithm developed by Hsu (\cite{Hsu}) to determine whether a group $\Gamma \subset \psl$ is congruence. He specifies such a $\Gamma$ by giving permutations associated to its $\Gamma(1)$-graph, and his algorithm amounts to checking a list of relations. Thus, if we consider one of our $\Gamma(2)$-graphs, we can determine if the corresponding subgroup of $\Gamma(2)$ is congruence by first converting it to a $\Gamma(1)$-graph and then applying Hsu's algorithm. For example, in applying this algorithm it turns out that the group $\Gamma$ which corresponds to the 3-star is noncongruence. One of the relations that fails amounts to checking that the cycle $\left(\beta_1^4(\alpha_1\sigma_1)^{-4}\beta_1^4\right)^4 \neq 1$, where $\alpha_1$, $\beta_1$ and $\sigma_1$ are the permutations in $S_{18}$ corresponding to the white vertices, black vertices, and faces of the right-most graph in Figure \ref{fig:3_star_graphs}.

In summary, it is true that every finite-index subgroup of $\psl$ can be viewed as a graph. In doing so, we only get restricted types of graphs due to the degree restrictions for the black and white vertices in these $\Gamma(1)$-graphs, but we can then apply known results for subgroups of $\Gamma(1)$ to these graphs. However, there are advantages to working with our versions of the graphs even though they apply only to subgroups of $\Gamma(2)$. Firstly, the $\Gamma(2)$-graphs have six times fewer edges than their $\Gamma(1)$ counterparts, which simplifies computing and allows us to draw graphs for higher-index subgroups. Secondly, we are less restricted on which graphs can occur; in fact by parts 3 and 4 of Birch's Theorem \ref{correspondence_theorem} we can interpret any graph as a $\Gamma(2)$-graph, with no restrictions on the degrees of the black and white vertices. This flexibility can be useful. For example, in a forthcoming paper we will use these graphs to display infinite families of noncongruence subgroups of every possible even level on surfaces of genus 1 and 2.

\section{Criteria for congruence via Larcher}
\label{larchers_results}

In \cite{larcher}, Larcher proves some results about the cusp widths of congruence subgroups $\Gamma \subset \psl$.  In this section we will interpret and generalize two of these results. When interpreted in terms of our graphs, they prove very powerful: for many graphs, they will allow us to determine almost immediately that the corresponding group is not a congruence subgroup.
\begin{thm}[(Larcher \cite{larcher})] 
\label{larcher_1}
If $\Gamma$ is a congruence subgroup of level $m$ and $d$ and $e$ are the respective widths of $\infty$ and $0$ in $
\Gamma$ then $de \equiv 0 \pmod{m}$.
\end{thm}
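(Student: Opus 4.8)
The plan is to reduce the congruence condition to a computation in $\mathrm{SL}_2(\mathbb Z/p^{a})$ for each prime power $p^{a}\parallel m$, using the filtration of $\mathrm{SL}_2(\mathbb Z/p^{a})$ by congruence kernels. Since $\Gamma$ has level $m$ we have $\Gamma\supseteq\Gamma(m)$, so $\overline\Gamma:=\Gamma/\Gamma(m)$ sits inside $\mathrm{SL}_2(\mathbb Z/m)$ (the routine distinction between $\mathrm{SL}_2$ and $\mathrm{PSL}_2$, and the levels $m\le2$, can be ignored), and minimality of $m$ is equivalent to: $\overline\Gamma$ contains $\ker\!\big(\mathrm{SL}_2(\mathbb Z/m)\to\mathrm{SL}_2(\mathbb Z/(m/p))\big)$ for no prime $p\mid m$ (this uses $\Gamma(a)\Gamma(b)=\Gamma(\gcd(a,b))$). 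The width hypotheses say precisely that $\left(\begin{smallmatrix}1&d\\0&1\end{smallmatrix}\right)$ and $\left(\begin{smallmatrix}1&0\\e&1\end{smallmatrix}\right)$ lie in $\Gamma$ (powers of the parabolic generators fixing $\infty$ and $0$). Since $m\mid de$ is equivalent to $v_{p}(d)+v_{p}(e)\ge v_{p}(m)$ for every prime $p$, I would fix $p$ and suppose for contradiction that $i+j\le a-1$, writing $a=v_{p}(m)$, $i=v_{p}(d)$, $j=v_{p}(e)$ (so $i,j<a$).

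Next I would localize at $p$. Write $m=p^{a}m'$ with $p\nmid m'$; the Chinese Remainder Theorem gives $\mathrm{SL}_2(\mathbb Z/m)\cong G_{1}\times G_{2}$ with $G_{1}=\mathrm{SL}_2(\mathbb Z/p^{a})$ and $G_{2}=\mathrm{SL}_2(\mathbb Z/m')$, under which the forbidden kernel becomes $N\times\{1\}$ for $N:=\ker\!\big(\mathrm{SL}_2(\mathbb Z/p^{a})\to\mathrm{SL}_2(\mathbb Z/p^{a-1})\big)$. Raising $\left(\begin{smallmatrix}1&d\\0&1\end{smallmatrix}\right)\in\Gamma$ to the power $r$ equal to the order of its image in $G_{2}$ (a divisor of $m'$, hence prime to $p$) gives an element of $\Gamma$ with image $(X,1)\in G_{1}\times G_{2}$, where $X$ generates the same cyclic subgroup of $G_{1}$ as $X_{i}:=\left(\begin{smallmatrix}1&p^{i}\\0&1\end{smallmatrix}\right)$; similarly $\left(\begin{smallmatrix}1&0\\e&1\end{smallmatrix}\right)$ gives $(Y,1)\in\overline\Gamma$ with $Y$ generating the cyclic subgroup of $Y_{j}:=\left(\begin{smallmatrix}1&0\\p^{j}&1\end{smallmatrix}\right)$. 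Hence $\overline\Gamma\supseteq\langle X_{i},Y_{j}\rangle\times\{1\}$, and the whole problem collapses to a claim inside $\mathrm{SL}_2(\mathbb Z/p^{a})$.

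\emph{Claim: if $i+j\le a-1$ then $\langle X_{i},Y_{j}\rangle\supseteq N$.} For $a\ge2$ the map $I+p^{a-1}M\mapsto M\bmod p$ identifies $N$ with the additive group $\mathfrak{sl}_{2}(\mathbb F_{p})$, elementary abelian with basis $E=\left(\begin{smallmatrix}0&1\\0&0\end{smallmatrix}\right)$, $F=\left(\begin{smallmatrix}0&0\\1&0\end{smallmatrix}\right)$, $H=\left(\begin{smallmatrix}1&0\\0&-1\end{smallmatrix}\right)$, so it is enough to find three elements of $\langle X_{i},Y_{j}\rangle\cap N$ spanning it. Two are immediate: $X_{i}^{\,p^{\,a-1-i}}=I+p^{a-1}E$ and $Y_{j}^{\,p^{\,a-1-j}}=I+p^{a-1}F$. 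For the third, a direct expansion gives $[X_{i},Y_{j}]=I+p^{i+j}Z+(\text{terms of }p\text{-adic order}\ge i+j+1)$ with $Z$ having nonzero $H$-component, and raising $[X_{i},Y_{j}]$ to the power $p^{\,a-1-i-j}$ — using $v_{p}\binom{p^{c}}{k}=c-v_{p}(k)$ for $p$ odd to kill all the binomial cross-terms modulo $p^{a}$ — leaves $I+p^{a-1}Z$. These three span $N$ (when $i=j=0$, already $\langle X_{0},Y_{0}\rangle=\mathrm{SL}_2(\mathbb Z/p^{a})\supseteq N$). This settles the Claim for odd $p$; the prime $p=2$, where $v_{2}\binom{2^{c}}{2}=c-1$ and the scalar in $[E,H]=-2E$ is not a unit, needs the same idea run with more careful commutator choices, and $a=1$ is the easy case where the forbidden kernel is $\mathrm{SL}_2(\mathbb F_{p})$ itself and the images of $\left(\begin{smallmatrix}1&d\\0&1\end{smallmatrix}\right)$, $\left(\begin{smallmatrix}1&0\\e&1\end{smallmatrix}\right)$ still generate the two unipotent subgroups.

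Granting the Claim, $\overline\Gamma\supseteq N\times\{1\}=\ker\!\big(\mathrm{SL}_2(\mathbb Z/m)\to\mathrm{SL}_2(\mathbb Z/(m/p))\big)$, hence $\Gamma\supseteq\Gamma(m/p)$ with $m/p<m$, contradicting the minimality of the level; therefore $v_{p}(d)+v_{p}(e)\ge v_{p}(m)$ for every $p$, i.e.\ $de\equiv0\pmod m$. Everything outside the Claim is Chinese Remainder Theorem bookkeeping together with the standard fact that the successive quotients of the congruence filtration of $\mathrm{SL}_2(\mathbb Z/p^{a})$ are copies of $\mathfrak{sl}_{2}(\mathbb F_{p})$; the one genuine obstacle is the Claim, whose crux is verifying that a suitable $p$-power of the commutator $[X_{i},Y_{j}]$ lands in the deepest congruence layer $N$ with nonzero $H$-component — transparent for odd $p$ via the binomial valuation estimate, and requiring the most care when $p=2$.
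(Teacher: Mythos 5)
First, a point of comparison: the paper does not prove this statement at all --- Theorem \ref{larcher_1} is imported verbatim from Larcher's paper \cite{larcher} and used as a black box (only its generalization, Theorem \ref{larcher_1_generalized}, is proved here, by conjugation arguments that take Larcher's result as given). So there is no proof in the paper to measure yours against; what follows is an assessment of your argument on its own terms. Your overall skeleton is sound and is, as far as I can tell, in the spirit of how such cusp-amplitude results are actually proved: the translation of the width hypotheses into membership of $\left(\begin{smallmatrix}1&d\\0&1\end{smallmatrix}\right)$ and $\left(\begin{smallmatrix}1&0\\e&1\end{smallmatrix}\right)$ in $\Gamma$ is correct, the reformulation of minimality of the level via $\Gamma(a)\Gamma(b)=\Gamma(\gcd(a,b))$ is correct, the CRT localization and the trick of raising to a power coprime to $p$ to kill the $G_2$-component are correct, and for odd $p$ with $a\ge 2$ your three elements $I+p^{a-1}E$, $I+p^{a-1}F$, and $[X_i,Y_j]^{p^{a-1-i-j}}$ do span $N\cong\mathfrak{sl}_2(\mathbb F_p)$ (I verified the commutator identity $[I+sE,I+tF]=I+st\left(\begin{smallmatrix}1+st&-s\\t&-1\end{smallmatrix}\right)$ and the binomial valuation estimate).

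The genuine gap is the case $p=2$ of your Claim, which you explicitly defer, and which in the context of this paper is the only case that always arises: every group here lies in $\Gamma(2)$ and has even level $2n$, so an argument valid only at odd primes proves $de\equiv 0$ merely modulo the odd part of $m$ --- strictly weaker than the theorem and insufficient for, e.g., the level-$24$ and level-$12$ examples in Sections \ref{larchers_results} and \ref{congruence_from_graphs}. The difficulties at $p=2$ are not cosmetic. (i) Your valuation estimate fails exactly where you need it: the $l=2$ term of $(I+2^cW)^{2^k}$ has valuation $k-1+2c=a-2+c$, which is $a-1<a$ when $c=i+j=1$, so the "cross-terms" are not killed and the element you land on is $I+2^{a-1}(W+W^2)$ rather than $I+2^{a-1}W$. (ii) The leading term of $[X_i,Y_j]$ is a multiple of $H$, and $H\equiv I\pmod 2$, so at $p=2$ the "third direction" you produce is the scalar direction; for instance $[X_1,Y_1]\equiv 5I\pmod 8$. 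This collides head-on with the $\mathrm{SL}_2$-versus-$\mathrm{PSL}_2$ identification you set aside at the start: that identification is harmless at odd $p$ but is precisely the delicate point at $p=2$, where the bottom congruence layer of $\mathrm{PSL}_2(\mathbb Z/2^a)$ is not $\mathfrak{sl}_2(\mathbb F_2)$ (its order even depends on whether $a=2$ or $a\ge 3$, according to whether $-I$ lies in $N$). In the small cases I checked ($a=2,3$, various $i,j$) the Claim does hold at $p=2$, but only after ad hoc searches for extra words such as $(X_0^{-1}Y_1)^2=-I$ that your binomial argument does not produce. So the statement is true, your reduction to the Claim is correct, but the Claim itself is only proved for odd $p$; the even case --- the one the paper needs --- requires a separate, more careful argument that is currently missing.
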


Recall that in Section \ref{cusp_widths} we saw that a vertex or face of degree $d$ has cusp width $2d$. Thus, we can restate the theorem as follows:

\begin{cor}[(Theorem \ref{larcher_1}, restated)] 
\label{larcher_1_graph}
Let $\Gamma \subset \Gamma(2)$ be a congruence subgroup of level $2n$. In the corresponding graph, let $d$ and $e$ be the respective degrees of the face corresponding to $\infty$ and the black vertex corresponding to $0$. Then $(2d)(2e) \equiv 0 \pmod{2n}$, and thus $2de \equiv 0 \pmod{n}$.
\end{cor}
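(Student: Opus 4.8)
The plan is to obtain this as a direct translation of Theorem \ref{larcher_1}, using Wohlfahrt's theorem and the cusp-width bookkeeping of Section \ref{cusp_widths} as the dictionary.

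First I would record that a congruence subgroup $\Gamma \subset \Gamma(2)$ is \emph{a fortiori} a congruence subgroup of $\psl$, and that the integer ``$2n$'' in the hypothesis is exactly the level of $\Gamma$ in the sense required by Theorem \ref{larcher_1} (the least $m$ with $\Gamma(m) \subset \Gamma$). This is where Wohlfahrt's theorem enters: that level equals the least common multiple of the cusp widths of $\Gamma$ viewed inside $\psl$, and Section \ref{cusp_widths} shows this lcm is twice the least common multiple of the degrees of the vertices and faces of the $\Gamma(2)$-graph — i.e.\ precisely $2n$. So the ``$m$'' of Theorem \ref{larcher_1} is $2n$.

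Next I would identify the two widths appearing in Theorem \ref{larcher_1}. The cusp $\infty$ of $\psl$ has width $1$, and in $\Gamma$ the cusp at $\infty$ corresponds to the marked face, of degree $d$; by the computation of Section \ref{cusp_widths}, a face of degree $d$ touches $d$ copies of $\mathcal D$ (each of width $2$, exactly as for $\Gamma(2)$), so the width of $\infty$ in $\Gamma$ is $2d$. Likewise $0$ is $\psl$-equivalent to $\infty$, hence also has width $1$ in $\psl$, and the cusp at $0$ in $\Gamma$ corresponds to the black vertex of degree $e$, so its width in $\Gamma$ is $2e$. Feeding $m = 2n$ and the widths $2d,\,2e$ into Theorem \ref{larcher_1} gives $(2d)(2e) \equiv 0 \pmod{2n}$, and cancelling a factor of $2$ yields $2de \equiv 0 \pmod{n}$.

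The one point that genuinely needs care is the matching of the two notions of level: without Wohlfahrt's theorem the hypothesis ``$\Gamma$ has level $2n$'' (twice the lcm of the graph degrees) would not be the input Theorem \ref{larcher_1} wants. A secondary point worth stating explicitly is that $0$, like $\infty$, has width $1$ in $\psl$, so that the passage from degrees to widths contributes exactly the factor of $2$ in each slot; everything else is routine bookkeeping.
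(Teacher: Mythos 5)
Your proposal is correct and follows essentially the same route as the paper: the paper justifies this corollary simply by invoking the Section \ref{cusp_widths} observation that a vertex or face of degree $d$ has cusp width $2d$ (together with Wohlfahrt's theorem identifying the graph-theoretic level $2n$ with the level in Larcher's sense), and then substituting into Theorem \ref{larcher_1}. Your extra care in matching the two notions of level via Wohlfahrt is exactly the point the paper settles in Section \ref{cusp_widths} before stating the corollary.
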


We can generalize this result as follows:

\begin{thm}
\label{larcher_1_generalized}
Let $\Gamma \subset \Gamma(2)$ be a congruence subgroup of level $2n$. In the corresponding graph, for any face, let $d$ be the degree of that face and $e$ be the degree of any of its vertices. Then $2de \equiv 0 \pmod{n}$.
\end{thm}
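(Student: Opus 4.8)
The plan is to reduce the statement to Corollary~\ref{larcher_1_graph} --- equivalently, to Larcher's Theorem~\ref{larcher_1} itself --- by conjugating $\Gamma$ inside $\psl$ so that the chosen face becomes the marked face at the cusp $\infty$ and the chosen incident vertex becomes the cusp $0$. The whole argument then rests on exhibiting such a conjugating element, after which the conclusion follows formally.

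To build the conjugating element, I would work in a fundamental domain for $\Gamma$ tiled by copies of $\mathcal{D}$ (Definition~\ref{define_graph_domain}). A face of degree $d$ is the union of $d$ tiles $g\mathcal{D}$, $g\in\Gamma(2)$, meeting at its face center, and for each such tile the two cusps other than the face center, namely $g(0)$ and $g(1)$, are exactly the black and white vertices of that face met at the corresponding corner; hence every vertex of the face arises this way. So if $p$ is the cusp corresponding to the given face and $q$ the cusp corresponding to one of its vertices, then $p$ and $q$ are two of the three cusps of a single tile $g\mathcal{D}$. The three cusps $\tfrac{1}{0}=\infty$, $\tfrac{0}{1}=0$, $\tfrac{1}{1}=1$ of $\mathcal{D}$ form a unimodular set --- any two of them, written as columns, span an integer matrix of determinant $\pm1$ --- and applying the integer matrix $g$ preserves this, so writing $p=\tfrac{a}{b}$ and $q=\tfrac{c}{d}$ in lowest terms I get $ad-bc=\pm1$. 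Then $\gamma=\bpm d & -c \\ -b & a\epm$ (with its first row negated if $ad-bc=-1$) lies in $\psl$ and sends $p\mapsto\infty$, $q\mapsto 0$.

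With $\gamma$ in hand I would set $\Gamma'=\gamma\Gamma\gamma^{-1}$ and check three things. First, $\Gamma'$ is again congruence of level exactly $2n$: since $\Gamma(2n)$ is normal in $\psl$, conjugation fixes it, so $\Gamma(2n)\subset\Gamma$ gives $\Gamma(2n)\subset\Gamma'$, while $\Gamma(m)\subset\Gamma'$ would force $\Gamma(m)\subset\Gamma$ and hence $m\ge 2n$; by Wohlfahrt's theorem (Section~\ref{cusp_widths}) this level also equals its Wohlfahrt level. Second, conjugation by an element of $\psl$ permutes cusps and preserves their widths, so the width of $\infty$ in $\Gamma'$ equals the width $2d$ of $p$ in $\Gamma$, and the width of $0$ in $\Gamma'$ equals the width $2e$ of $q$ in $\Gamma$. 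Third, Theorem~\ref{larcher_1} applied to $\Gamma'$ with $m=2n$ gives $(2d)(2e)\equiv 0\pmod{2n}$, that is $4de\equiv0\pmod{2n}$, which is precisely $2de\equiv0\pmod n$.

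I expect the main obstacle to be the geometric observation that a face and an incident vertex of the graph correspond to a unimodular pair of cusps; once that is established, the remainder is routine bookkeeping about conjugation, preservation of level and cusp width, and one divisibility manipulation. The only point needing a little care there is confirming that every vertex lying on the boundary of a face really is one of the ``corner cusps'' of the $d$ tiles composing that face, which unwinds directly from Definition~\ref{define_graph_domain}.
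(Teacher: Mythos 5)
Your proposal is correct, and it follows the same overall strategy as the paper's proof: conjugate $\Gamma$ within $\psl$ so that the chosen face center goes to $\infty$ and the chosen incident vertex goes to $0$, note that congruence and level are preserved because $\Gamma(2n)\vartriangleleft\psl$, and then invoke Larcher. Where you differ is in how the conjugating element is produced, and your route is arguably cleaner at that step. The paper does it in two stages phrased in the language of graphs: first conjugate by some $g\in\Gamma(2)$ taking $\infty$ to the face center (which only re-marks the face and leaves the graph, hence the covering by the graph of $\Gamma(2n)$, unchanged), and then split into cases according to the color of the vertex --- a black vertex sits at an even integer and is moved to $0$ by a power of $A$ (conjugation by translation, again not changing the graph), while a white vertex sits at an odd integer and is moved to $0$ by an odd power of ${1\ 1\choose 0\ 1}$, which swaps the colors of the vertices. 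Your single-step construction via the unimodularity of the pair of cusps $(p,q)$ --- justified exactly as you say, because $p$ and $q$ are two cusps of one tile $g\mathcal{D}$ and the cusps $\infty,0,\pm1$ of $\mathcal{D}$ are pairwise unimodular --- avoids the color case analysis entirely and, by applying Larcher's Theorem \ref{larcher_1} directly to the cusp widths $2d$ and $2e$, also bypasses the restatement in Corollary \ref{larcher_1_graph}. What the paper's version buys in exchange is that every intermediate group stays visibly attached to the same graph covered by the graph of $\Gamma(2n)$, which fits the expository theme; what yours buys is an explicit matrix and no cases. Two cosmetic points: you reuse $d$ both for the degree of the face and for the denominator of $q$, which should be renamed; and each tile actually has the two white-vertex cusps $g(1)$ and $g(-1)$ in addition to $g(0)$, but since $-1$ is also unimodular with $\infty$ (and $g(-1)=(gA^{-1})(1)$ lies in the adjacent tile of the same face) this does not affect your argument.
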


\begin{proof}
Because we assumed $\Gamma$ is congruence of level $2n$ we know $\Gamma(2n) \subset \Gamma$, and thus the graph associated to $\Gamma(2n)$ is a cover for the graph associated to $\Gamma$. 

Let $F$ be a face of degree $d$ of the graph associated to $\Gamma$, and $v$ be one of its vertices of degree $e$. Find a fundamental domain for the graph associated to $\Gamma$ as described in Definition \ref{define_graph_domain}. We will see how we can conjugate by an element of $\psl$ to move the cusp associated to $F$ to $\infty$, and then the cusp associated to $v$ to zero. The resulting group, $\Gamma'$, is congruence of level $2n$ if and only if $\Gamma$ is congruence, since $\Gamma(2n) \vartriangleleft \psl$. 

Recall from Theorem \ref{correspondence_theorem} that the correspondence between $\Gamma$ and its graph requires a marked face, which we have chosen to be the face at $\infty$. The cusp that corresponds to the face center of $F$ is the image of $\infty$ under an element $g \in \Gamma(2)$, and so conjugation by $g$ moves the center of $F$ to $\infty$.  This will not change the graph itself, and will not change the degree of the face $F$; the only change is that $F$ is now the marked face. Marking a different face will not change the fact that the graph for $\Gamma(2n)$ is a cover, so this face of degree $d$ and the vertex at 0 must satisfy Larcher's congruence property above.

Now consider the vertex $v$, and let $v'$ be its image under the conjugation in the previous step. Conjugation does not change the degree of a vertex, so the degree of $v'$ is $e$. Notice that because the face $F$ is now centered at $\infty$ it is tiled by translates of $\mathcal{D}$ under $A$, and so the cusps corresponding to black vertices are found at even integers, while the cusps corresponding to the white vertices are found at odd integers. First suppose that the vertex $v$ is a black vertex. Translation by the appropriate power of $A$ will fix $\infty$ and move $v'$ to 0. Recall from part 3 of Theorem \ref{correspondence_theorem} the correspondence between groups and graphs is modulo conjugacy by translation: applying powers of $A$ will not change the graph or the face which is marked, and so the graph is still covered by $\Gamma(2n)$. Hence, Larcher's congruence property is still satisfied.

Suppose instead that $v$ is a white vertex; in this case we can move $v'$ to 0 by conjugating by an odd power of ${1 \, \, \, \, 1\choose 0 \, \,\, \, 1}$. On the level of the graph this has the effect of switching the colors of the black and white vertices. The resulting group $\Gamma'$ is congruence if and only if $\Gamma$ is congruence, because $\Gamma(2n) \vartriangleleft \psl$, and so Larcher's congruence property holds.
\end{proof}
Another result of Larcher can also provide a powerful criteria for congruence:

\begin{thm}[(Larcher \cite{larcher})] 
\label{larcher_2}
If $\Gamma$ is a congruence subgroup of level $m$ then $\Gamma$ contains a cusp of width $m$.
\end{thm}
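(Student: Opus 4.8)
The plan is to translate the problem into linear algebra modulo $m$ and then assemble the required cusp one prime at a time. Since $\Gamma$ is congruence of level $m$, we have $\Gamma(m)\subseteq\Gamma$, so $\Gamma$ is the full preimage of a subgroup $\bar\Gamma\le G:=SL_2(\mathbb Z/m\mathbb Z)/\{\pm I\}$ under reduction mod $m$ (write $\overline{X}$ for the image of a matrix $X$ in $G$). For a primitive vector $v=(a,c)\in\mathbb Z^2$ set $N_v=\binom{a}{c}(-c,\,a)=\bigl(\begin{smallmatrix}-ac&a^2\\-c^2&ac\end{smallmatrix}\bigr)$; this is a rank-one integer matrix with $N_v^2=0$ whose entries have no common factor (because $\gcd(a,c)=1$). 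A direct computation with any $g=\bigl(\begin{smallmatrix}a&b\\c&d\end{smallmatrix}\bigr)\in SL_2(\mathbb Z)$ gives $g\bigl(\begin{smallmatrix}1&h\\0&1\end{smallmatrix}\bigr)g^{-1}=I+hN_v$, so the width of the cusp $a/c$ of $\Gamma$ is the least $h>0$ with $\overline{I+hN_v}\in\bar\Gamma$. Since $\{h:\overline{I+hN_v}\in\bar\Gamma\}$ is a subgroup of $\mathbb Z/m\mathbb Z$, every cusp width divides $m$, and what I want is a $v$ for which $\bar\Gamma$ contains $\overline{I+hN_v}$ for no $h$ with $0<h<m$; then the cusp $a/c$ has width exactly $m$.

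Minimality of the level supplies the needed obstruction: if $\Gamma(m/p)\subseteq\Gamma$ for some prime $p\mid m$, then $m/p<m$ would satisfy $\Gamma(m/p)\subseteq\Gamma$, contradicting the minimality of $m$. Hence, for every prime $p\mid m$, $\bar\Gamma$ fails to contain the congruence kernel $K_p:=\Gamma(m/p)/\Gamma(m)\le G$. I would then invoke the (standard) fact that $K_p$ is generated by the transvections $\overline{I+(m/p)N_v}$ as $v$ ranges over primitive vectors: isolating the $p$-primary part of $G$ via the Chinese Remainder Theorem, $K_p$ becomes the image of $\ker\!\bigl(SL_2(\mathbb Z/p^{e})\to SL_2(\mathbb Z/p^{e-1})\bigr)$, with $p^{e}$ the exact power of $p$ dividing $m$; for $e=1$ this kernel is all of $SL_2(\mathbb F_p)$, and for $e\ge2$ it is abelian, isomorphic to $\mathfrak{sl}_2(\mathbb F_p)$. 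In either case the transvections above reduce, modulo $p$, to a generating set of $SL_2(\mathbb F_p)$, respectively to a spanning set for the nilpotents of $\mathfrak{sl}_2(\mathbb F_p)$ --- valid in every residue characteristic. Consequently there is, for each $p\mid m$, a primitive vector $v_p$ with $\overline{I+(m/p)N_{v_p}}\notin\bar\Gamma$.

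It remains to glue the $v_p$ into one vector. Since $\overline{I+(m/p)N_v}$ depends only on $v\bmod p$ (as $(m/p)(N_v-N_w)\equiv0\pmod m$ whenever $v\equiv w\pmod p$), it suffices to produce a primitive $v\in\mathbb Z^2$ with $v\equiv v_p\pmod p$ for every $p\mid m$. These congruences determine $v$ modulo $r:=\prod_{p\mid m}p$, and since each $v_p\not\equiv0\pmod p$ the prescribed class reduces to a nonzero vector mod every $p\mid r$; a short Chinese-Remainder argument (choose the first coordinate satisfying the congruences, then choose the second to satisfy its congruences and to be coprime to the first) upgrades this to a genuinely primitive $v$. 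Now take the cusp $a/c$ with $(a,c)=v$. If its width were a proper divisor $d$ of $m$, then $d\mid m/p$ for some prime $p\mid m$, whence $\overline{I+(m/p)N_v}=\overline{(I+dN_v)^{(m/p)/d}}\in\bar\Gamma$; but $\overline{I+(m/p)N_v}=\overline{I+(m/p)N_{v_p}}\notin\bar\Gamma$, a contradiction. Therefore this cusp has width $m$.

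The step I expect to be the real work is the middle one: verifying that $K_p$ is generated by the transvections $\overline{I+(m/p)N_v}$. That is where a genuine case analysis lives --- $e=1$ versus $e\ge2$, the behaviour at $p=2$, and the bookkeeping forced by passing to $G=SL_2(\mathbb Z/m\mathbb Z)/\{\pm I\}$ rather than $SL_2(\mathbb Z/m\mathbb Z)$ itself --- whereas the opening reduction and the closing Chinese-Remainder gluing are routine.
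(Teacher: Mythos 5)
The paper does not prove this statement at all --- it is quoted verbatim as Larcher's theorem, with \cite{larcher} as the source, and the only argument in that section is for the generalization (Theorem \ref{larcher_1_generalized}) of the \emph{other} Larcher result. So there is no in-paper proof to compare against; what you have written is an independent proof, and I believe it is correct. Your reduction is clean: widths divide $m$ because $(I+h_1N_v)(I+h_2N_v)=I+(h_1+h_2)N_v$, so it suffices to find one primitive $v$ with $\overline{I+(m/p)N_v}\notin\bar\Gamma$ for every prime $p\mid m$; minimality of the level gives you, for each $p$, \emph{some} element of $\Gamma(m/p)$ outside $\Gamma$, and the lemma that the kernel $K_p$ is generated by the transvections $\overline{I+(m/p)N_v}$ converts that into a bad $v_p$. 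You correctly identify that lemma as the real content, and your sketch of it holds up: for $e\ge 2$ the kernel is elementary abelian isomorphic to the trace-zero matrices over $\mathbb{F}_p$, and $N_{(1,0)},N_{(0,1)},N_{(1,1)}$ already span (including at $p=2$, where $I$ itself is trace-zero); for $e=1$ the unipotent upper and lower triangular subgroups generate $SL_2(\mathbb{F}_p)$. The only wording I would tighten is ``a spanning set for the nilpotents of $\mathfrak{sl}_2(\mathbb{F}_p)$'' --- the nilpotents are not a subspace; you mean the nilpotents $N_v$ span all of $\mathfrak{sl}_2(\mathbb{F}_p)$. The final Chinese-Remainder gluing works because $N_v\bmod p$ depends only on $v\bmod p$ and each $v_p$ is nonzero mod $p$, so primitivity of the glued $v$ can be arranged as you describe. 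This is a genuinely different (and more structural) route than Larcher's original, which argues directly with cusp amplitudes; yours has the advantage of isolating a reusable group-theoretic fact about congruence kernels, at the cost of the $\pm I$ and small-modulus bookkeeping you already flag.
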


When interpreted for graphs the statement reads as follows:

\begin{cor}[(Theorem \ref{larcher_2}, restated)] 
\label{larcher_2_graph}
If $\Gamma \subset \Gamma(2)$ is a congruence subgroup of level $2n$, then the graph corresponding to $\Gamma$ has either a vertex or a face of degree $n$.
\end{cor}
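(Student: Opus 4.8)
The plan is to deduce this corollary directly from Larcher's Theorem \ref{larcher_2}, passing through the cusp–width dictionary set up in Section \ref{cusp_widths}. Since $\Gamma \subset \Gamma(2) \subset \psl$, the group $\Gamma$ is in particular a finite-index subgroup of $\psl$, so Larcher's theorem applies to it once we have pinned down its level in terms of cusp widths.

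First I would record the correspondence between the cusps of $\Gamma$ and the vertices and faces of its graph. Taking a fundamental domain for $\Gamma$ tiled by copies of $\mathcal{D}$ as in Definition \ref{define_graph_domain}, every cusp of $\Gamma$ is an image of one of the cusps $0$, $1$, $\infty$ of $\Gamma(2)$, and after gluing becomes exactly a black vertex, a white vertex, or a face center of the graph; conversely every such vertex or face arises this way. Moreover, as noted in Section \ref{cusp_widths}, a vertex or face of degree $d$ meets $d$ copies of $\mathcal{D}$, each of width $2$ at the relevant cusp, so the corresponding cusp of $\Gamma$ has width $2d$ as a subgroup of $\psl$. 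Hence the cusp widths of $\Gamma$ are precisely the numbers $2d$ as $d$ ranges over the degrees of the vertices and faces of the graph.

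Next I would match up the levels. By Wohlfahrt's theorem, as recalled in Section \ref{cusp_widths}, the level of $\Gamma$ is the least common multiple of its cusp widths, which by the previous step equals twice the least common multiple of the degrees of the vertices and faces — that is, the graph level $2n$. With this identification, Theorem \ref{larcher_2} guarantees that $\Gamma$ has a cusp of width exactly $2n$. By the dictionary, that cusp corresponds to a vertex or face of the graph whose degree $d$ satisfies $2d = 2n$, so $d = n$, and the graph has a vertex or face of degree $n$, as claimed.

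The only real content is the bookkeeping in the first two steps — in particular remembering that a degree-$d$ vertex or face has cusp width $2d$ rather than $d$, because $\Gamma(2)$ itself already has cusp width $2$ at every cusp — together with the appeal to Wohlfahrt to see that the graph level $2n$ is the level $m$ appearing in Larcher's statement. Unlike the proof of Theorem \ref{larcher_1_generalized}, no conjugation argument is needed here: Larcher's Theorem \ref{larcher_2} already asserts the existence of a cusp of width $m$ without reference to the distinguished cusps $0$ and $\infty$, so it transfers to the graph immediately once the notions of level are reconciled.
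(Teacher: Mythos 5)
Your proof is correct and follows exactly the route the paper intends: the corollary is presented as a restatement whose justification is the dictionary from Section \ref{cusp_widths} (a degree-$d$ vertex or face corresponds to a cusp of width $2d$, and Wohlfahrt identifies the graph level $2n$ with the level $m$ in Larcher's theorem), which is precisely your argument. The bookkeeping is handled carefully and nothing is missing.
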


These criteria provide necessary conditions for congruence. Consider the example in Figure \ref{fig:fish_becomes_spaceship}: the fish has vertices of degrees 1, 2, and 4 and faces of degree 1 and 3, and thus the level of this graph is $2\cdot lcm(1,2,3,4) = 24$. Because the graph does not contain a vertex or face of degree 12, we immediately conclude that the corresponding group is noncongruence. By contrast, if we were to apply Hsu's algorithm as in the above section, we would need to work with the $\Gamma(1)$ version of the graph (the spaceship). This graph has permutations in $S_{24}$ and the computations are lengthy by comparison.

However, the conditions are not sufficient: notice that while Hsu's algorithm showed us that the 3-star corresponds to a noncongruence subgroup, the graph does not violate Larcher's criteria above. We have an algorithm to determine congruence that depends only on graphs in Section \ref{congruence_from_graphs}.

\section{Tilings and Loops on Graphs}
\label{relabel_graph}

In what follows we describe a way to label a graph, which we will call the {\em $\Gamma(2)$-tiling of $\mathcal{G}$}. In Section \ref{graph_to_group}, when we discussed a way to go from a graph to a group, we saw that the fundamental domain for a graph $\mathcal{G}$ with $n$ edges consists of $n$ copies of $\mathcal{D}$, our fundamental domain for $\Gamma(2)$. We start by reexamining our graph and fundamental domain $\mathcal{D}$ for $\Gamma(2)$ as pictured in Figure \ref{fig:domain_gamma2_graph}. We label the face center at $\infty$ as $*$, and use dotted lines to indicate the arc from 1 to $\infty$ and the arc from $-1$ to $\infty$. Notice that crossing the arc from 1 to $\infty$ corresponds to applying the generator $A$ to $\mathcal{D}$, while crossing the arc from 0 to 1 corresponds to applying $B$ to $\mathcal{D}$. When it will prove helpful, we will indicate these operations in the quadrilateral. See Figure \ref{fig:subdivide_domain}.
\begin{figure}[htb]
\centering
\includegraphics[width=.9 \textwidth]{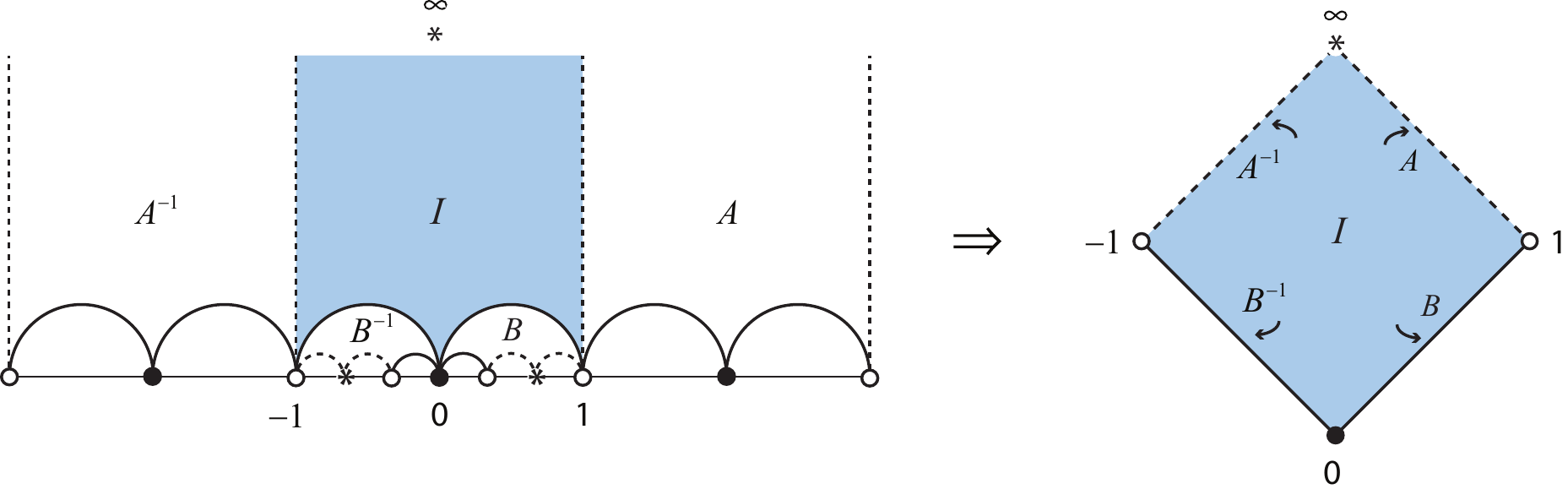}
\caption{Labeling a tile}
\label{fig:subdivide_domain}
\end{figure}

Recall from Section \ref{group_to_graph} that if we are given a group $\Gamma \subset \Gamma(2)$ we can construct the corresponding graph $\mathcal G$ by first forming a fundamental domain for $\mathcal G$. We then identify its sides and fill in its cusps to form a Riemann surface, and draw the graph using $0$ and its images as black vertices, $-1$ and $1$ and their images as white vertices, and images of the arc from $0$ to $1$ as edges. One way we can form a $\Gamma(2)$-tiling for $\mathcal G$ is by taking the above and adding more labels: we explicitly label $\infty$ and its images as $*$, and the images of the lines from $-1$ to $\infty$ and from $1$ to $\infty$ as dotted lines. In doing so, we have tiled the surface $\Sigma'$ with quadrilaterals, each of which corresponds to the tile in Figure \ref{fig:subdivide_domain}. Recall from Part 2 of Theorem \ref{correspondence_theorem} that specifying a graph involves marking one face with a $\star$; this marking will appear in one of the tiles which comprises the corresponding face. (Choosing a different tile within the face corresponds to conjugation by translation, as in Part 3 of the theorem.) Each tile is understood to carry the same labels of $A^{\pm 1}$ and $B^{\pm 1}$ as in Figure \ref{fig:subdivide_domain}, though as the graphs become more complicated these will not always be indicated. When we are specifically referring to the $\Gamma(2)$-tiling which found in this way we will call it the {\em standard $\Gamma(2)$-tiling}.

Suppose instead we are given a graph $\calG$. To form a $\Gamma(2)$-tiling we could first form a fundamental domain as we did in Section \ref{graph_to_group} and then proceed as above. Instead we now describe a procedure to go directly from the graph to a $\Gamma(2)$-tiling without using the fundamental domain as an intermediate step.

First we label each face with a $*$ at its center. Next, we add dotted lines from the white vertices to face centers in such a way that, as we rotate around a white vertex, the solid lines (the graph edges) and the dotted lines (to the face centers) alternate.  This results in a surface tiled with quadrilaterals which is isotopic to the tiling we would have found by first constructing the fundamental domain as above. This isotopy is not unique, but we will fix one for each $\Gamma(2)$-tiling. We will say that two $\Gamma(2)$-tilings are equivalent if they are isotopic.

We have several examples. In Figure \ref{fig:subdivide_graph_gamma2} we see this process applied to the graph corresponding to $\Gamma(2)$ itself. Note that there is only one tile in this situation.
\begin{figure}[htb]
\centering
\includegraphics[width=.4 \textwidth]{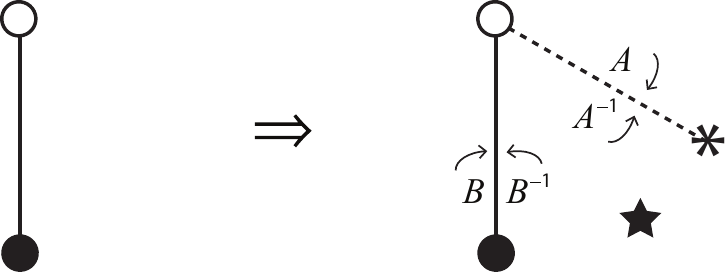}
\caption{$\Gamma(2)$-tiling for the graph associated to $\Gamma(2)$}
\label{fig:subdivide_graph_gamma2}
\end{figure}

Figure \ref{fig:subdivide_3-star} shows the $\Gamma(2)$-tiling of the 3-star, which uses three tiles. 
\begin{figure}[htb]
\centering
\includegraphics[width=.6 \textwidth]{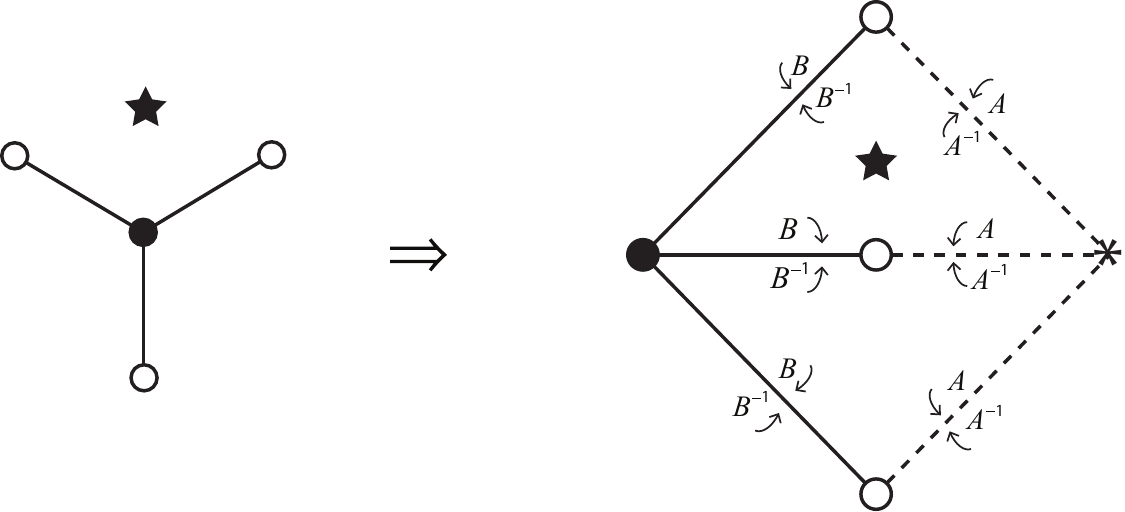}
\caption{$\Gamma(2)$-tiling of 3-star}
\label{fig:subdivide_3-star}
\end{figure}
In Figure \ref{fig:subdivide_graph} we see an example of a tiling for a graph with five edges and two faces.
\begin{figure}[htb]
\centering
\includegraphics[width=.7 \textwidth]{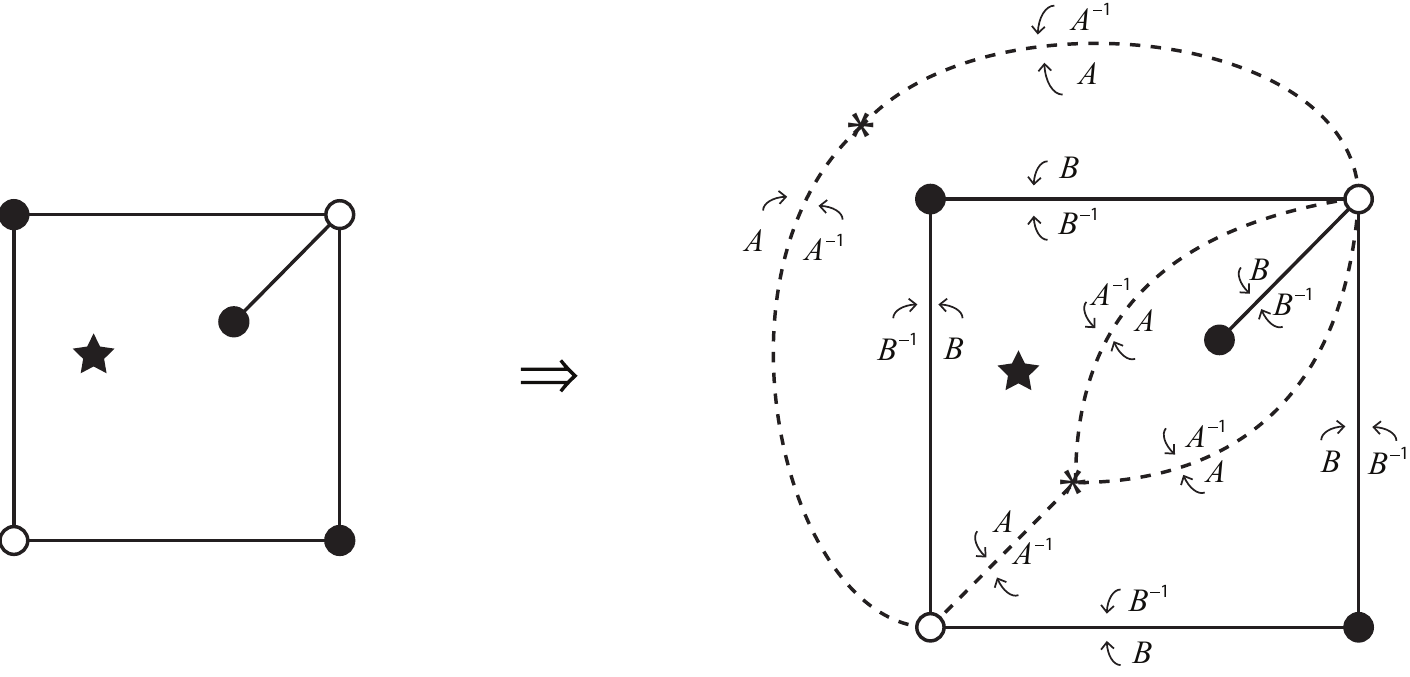}
\caption{$\Gamma(2)$-tiling for a graph  with 5 edges on the sphere.}
\label{fig:subdivide_graph}
\end{figure}
Figure \ref{fig:subdivide_graph_torus} shows an example of a tiling for a graph with four edges which lies on a torus.  
\begin{figure}[htb]
\centering
\includegraphics[width=.8 \textwidth]{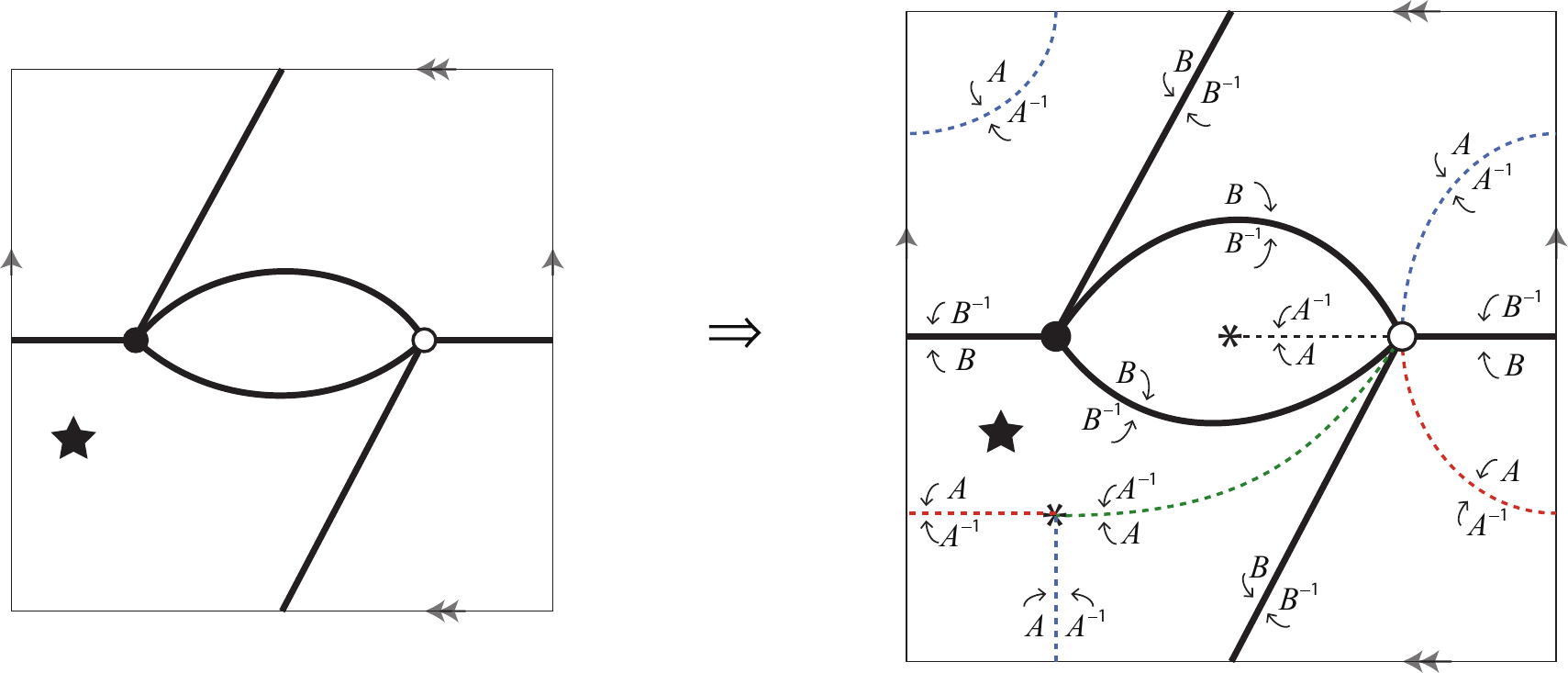}
\caption{$\Gamma(2)$-tiling for a graph with 4 edges on a torus.}
\label{fig:subdivide_graph_torus}
\end{figure}

Now that we have $\Gamma(2)$-tilings, we introduce some useful tools. First, notice the concept of a $\Gamma(2)$-tiling allows us to extend the notion of applying a word $\gamma$ in $A^{\pm 1}$ and $B^{\pm 1}$ to the graph itself in the following way: let $\calG$ be a graph with a $\Gamma(2)$-tiling, let $p$ be a point in the interior of one of the tiles. To apply $\gamma$ to $p$, we isotope the tiling to the standard $\Gamma(2)$-tiling, lift the standard tiling to the upper half-plane, and apply $\gamma$ to the lift of $p$. We then push down to the standard tiling and isotope back to our original $\Gamma(2)$-tiling, labeling the image of this point as $\gamma(p)$. We consider $\gamma(p)$ to be a point on $\calG$ associated to $p$. In fact, $p$ has a unique point associated to it in every tile: two points $p_1$ and $p_2$ are considered {\em associated} if there exists $\gamma \in \Gamma(2)$ so that $\gamma(p_1) = p_2$.

\begin{defn}
\rm
Let $\calG$ be a graph with a $\Gamma(2)$-tiling. A {\em path on a graph $\calG$} is a path on the underlying punctured surface $\Sigma '$ which is transverse to the tiling, begins at the marking $\star$ of the marked face and ends at a point associated to $\star$. 
\end{defn}

Suppose we have a path on $\calG$, a graph with a $\Gamma(2)$-tiling. We can associate this path to a word $\gamma$ in $A^{\pm 1}$ and $B^{\pm 1}$ by following the path starting at the base point $\star$ and recording the labels as we cross each solid or dotted line, always multiplying the labels on the left. (If the path is a loop we must choose a direction. Reversing the direction will form the inverse element $\gamma^{-1}$.) Conversely, suppose we are given a word $\gamma$ in $A^{\pm 1}$ and $B^{\pm 1}$ and want to construct a path. We begin at $\star$ and draw the path by crossing the appropriate solid or dotted line as we read each symbol of $\gamma$ from right to left. The path will end at $\gamma(\star)$.

These processes are essentially inverses: if we begin with a path and find its associated word, and then use this word to draw a path, the resulting path is isotopic to (and thus considered equivalent to) the original path. If we are given a word $\gamma$ and find its associated path, then finding the word associated to this path will recover $\gamma$ (or perhaps $\gamma^{-1}$ if the path were a loop.)

This association between group elements and paths is useful because of the following:

\begin{thm}
\label{loops_are_elements}
Let $\Gamma \subset \Gamma(2)$ be a finite-index subgroup and $\calG$ be its corresponding graph with a $\Gamma(2)$-tiling. Suppose we have $\gamma \in \Gamma(2)$ expressed as a word in $A^{\pm 1}$ and $B^{\pm 1}$ and its associated path on $\calG$. Then $\gamma \in \Gamma$ if and only if its associated path forms a loop.
\end{thm}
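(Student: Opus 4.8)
The plan is to transport the entire configuration to the upper half-plane $\mathbb{H}$, where the action of $\Gamma(2)$ is free, and to deduce the statement from that freeness. Since both the set of paths on $\mathcal G$ up to isotopy and the word read off a path are unchanged when the given $\Gamma(2)$-tiling is replaced by an equivalent one, I would first assume without loss of generality that $\mathcal G$ carries the standard $\Gamma(2)$-tiling. Recall that $\Gamma(2)$ is free on $A$ and $B$, hence torsion-free, hence acts freely and properly discontinuously on $\mathbb{H}$, and that the translates $\{g\mathcal D : g \in \Gamma(2)\}$ tile $\mathbb{H}$ with $g\mathcal D \leftrightarrow g$ and $\mathcal D \leftrightarrow I$. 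Because $\Gamma \subseteq \Gamma(2)$, the punctured surface $\Sigma'$ of $\mathcal G$ is $\mathbb{H}/\Gamma$, the quotient map $\pi \colon \mathbb{H} \to \Sigma'$ is a covering whose deck group is $\Gamma$, and the standard $\Gamma(2)$-tiling of $\Sigma'$ is the image of the tiling of $\mathbb{H}$; its tiles correspond to the right cosets $\Gamma \backslash \Gamma(2)$, the marked tile being the coset $\Gamma$. Fix the interior point $p_0 \in \mathcal D$ whose image in $\Sigma'$ is the base point $\star$. Observe that two points of $\Sigma'$ are associated precisely when their lifts to $\mathbb{H}$ lie in one $\Gamma(2)$-orbit, so the points of $\Sigma'$ associated to $\star$ are exactly the images of $\Gamma(2) \cdot p_0$, one in each tile.

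Next, given $\gamma \in \Gamma(2)$ with its associated path $c$ on $\mathcal G$, I would lift $c$ to the unique path $\tilde c$ in $\mathbb{H}$ with $\tilde c(0) = p_0$ and identify its endpoint. Drawing $c$ from the word $\gamma$ means crossing the wall labelled $A^{\pm 1}$ (respectively $B^{\pm 1}$) each time the corresponding letter of $\gamma$ is read; in $\mathbb{H}$ this is exactly the passage from a tile $g\mathcal D$ to its neighbour $g A^{\pm 1}\mathcal D$ (respectively $g B^{\pm 1}\mathcal D$) --- the same bookkeeping that produces the side-pairing transformations in Section~\ref{graph_to_group}. Reading $\gamma$ from right to left thus carries $\tilde c$ from the tile $\mathcal D$ to the tile $\gamma \mathcal D$, so $\tilde c(1) = \gamma p_0$; equivalently, $\tilde c(1)$ is the lift of the associated point $\gamma(\star)$.

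To conclude, note that $c$ is a loop if and only if $\tilde c(1)$ and $\tilde c(0) = p_0$ have the same image under $\pi$, that is, if and only if $\gamma p_0 \in \Gamma \cdot p_0$, i.e. $\gamma p_0 = \delta p_0$ for some $\delta \in \Gamma$. Since $\Gamma(2)$ acts freely on $\mathbb{H}$ and $p_0$ lies in the interior of a fundamental domain, it has trivial stabiliser, so $\gamma p_0 = \delta p_0$ forces $\gamma = \delta \in \Gamma$. Hence $c$ is a loop if and only if $\gamma \in \Gamma$ (there is no $\gamma$ versus $\gamma^{-1}$ ambiguity here, since we pass from the word to the path, and in any case $\gamma \in \Gamma \iff \gamma^{-1} \in \Gamma$).

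The step I expect to demand the most care is the identification $\tilde c(1) = \gamma p_0$: one must spell out precisely why reading the wall-crossing labels along a path recovers the element of $\Gamma(2)$ that moves $p_0$ to the lifted endpoint. This is the standard mechanism underlying the isomorphism between the fundamental group of such a quotient and the group, and it is exactly what the explicit side-pairing computations of Section~\ref{graph_to_group} carry out; I would isolate it as a short lemma there and then invoke it here. The only other point needing a remark is that reducing to the standard $\Gamma(2)$-tiling is harmless, since both sides of the asserted equivalence are invariant under the isotopies defining equivalence of tilings.
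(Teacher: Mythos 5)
Your argument is essentially identical to the paper's: both lift the path to the upper half-plane, observe that its endpoint is $\gamma(p)$ for a lift $p$ of $\star$, note that the path closes up if and only if $\gamma(p)$ and $p$ are $\Gamma$-equivalent, and then use the freeness of the $\Gamma(2)$-action on $\mathbb{H}$ to force $\gamma=\gamma'\in\Gamma$. The only difference is that you spell out the wall-crossing bookkeeping that identifies the lifted endpoint with $\gamma p_0$, which the paper compresses into the phrase ``by construction''; this is a sound and welcome elaboration, not a change of method.
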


\begin{proof}
Let $p$ be a lift of $\star$ in the upper half-plane. By construction, the path associated to $\gamma$ begins at $\star$ and ends at $\gamma(\star)$, where $\gamma(\star)$ is found by applying $\gamma$ to $p$. If $\gamma$ is associated to a loop, then $\gamma(\star) = \star$ on $\calG$. This means that in the upper half-plane, $p$ and $\gamma(p)$ are equivalent under $\Gamma$, so there is a $\gamma' \in \Gamma$ with $\gamma(p) = \gamma'(p)$, and thus $(\gamma')^{-1}\gamma(p)=p$. Because $\Gamma(2)$ acts freely on the upper half-plane, the stabilizer of $p$ is trivial, so $(\gamma')^{-1}\gamma=1$. Thus $\gamma'=\gamma$, so $\gamma \in \Gamma$. Conversely, if the path associated to $\gamma$ is not a loop, then $\gamma(p)$ is not equivalent to $p$ under $\Gamma$, and thus $\gamma \notin \Gamma$.
\end{proof}

\section{Generators from a graph}
\label{generators_from_graph}

Theorem \ref{loops_are_elements} has many useful applications. In this section we will discuss how to use the $\Gamma(2)$-tiling of a graph $\calG$ to read generators for its corresponding group $\Gamma \subset \Gamma(2)$ in terms of $A$ and $B$. 

After forming the $\Gamma(2)$-tiling of $\calG$ we form a collection of loops, each using the $\star$ of the marked face as a base point. The collection will be chosen to generate the fundamental group of the underlying punctured surface $\Sigma'$. For a graph on a sphere, we need $v+f-1$ loops, where $v$ is the total number of vertices and $f$ is the number of faces: we use a loop around all but one of the vertices and face centers. For a graph on a surface of genus $g$ we also add $2g$ loops corresponding to generators of the fundamental group of the underlying compact surface. For each of these loops we then find the associated word $\gamma$ in $A^{\pm1}$ and $B^{\pm1}$; we notate the collection of words as $\mathcal L$. By Theorem \ref{loops_are_elements} these words are all elements of $\Gamma$. The direction we choose to follow each loop is irrelevant, because if $\gamma$ is in a generating set for $\Gamma$, we can replace $\gamma$ with $\gamma^{-1}$ without changing $\Gamma$. In Theorem \ref{loops_are_generators} we will see that $\mathcal L$ is sufficient to generate $\Gamma$. First we will see some examples.

Figure \ref{fig:gamma2_with_loops} shows this process applied to the graph for $\Gamma(2)$ itself. Notice that as we follow the top loop in the counterclockwise direction we find the generator $A$, and following the bottom loop in the clockwise direction produces the generator $B$. This agrees with the fact that $\Gamma(2)$ is generated by the elements $A$ and $B$.
\begin{figure}[htb]
\centering
\includegraphics[width=.17 \textwidth]{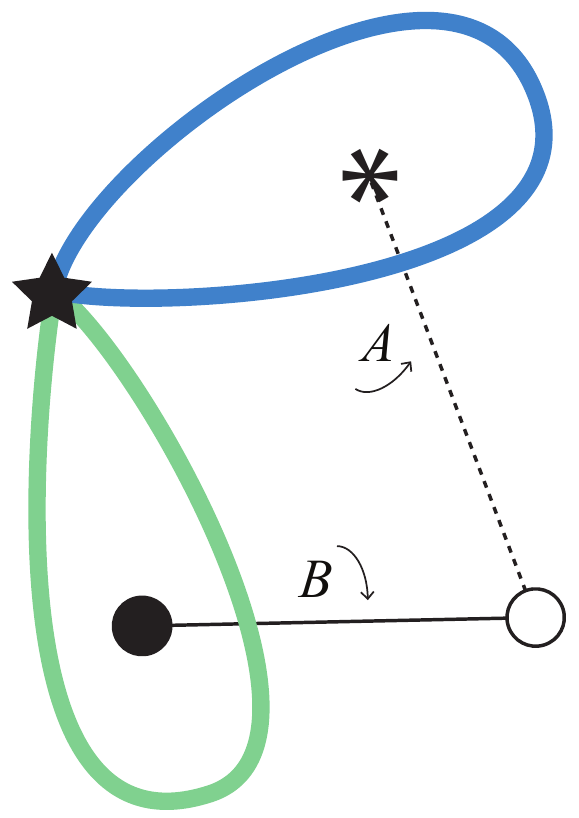}
\caption{Reading generators for $\Gamma(2)$ from its graph}
\label{fig:gamma2_with_loops}
\end{figure}

Now consider the 3-star; we found the $\Gamma(2)$-tiling in Figure \ref{fig:subdivide_3-star}. In Figure \ref{fig:3-star_with_loops} we see this graph with loops added. We can find the generators by following each loop counterclockwise to determine that the group associated to the 3-star is $\Gamma = \left< BA^{-1}, \ A^{-1}B, \ B^{-1}A^{-1}B^2, \ B^{-3} \right>$.

\begin{figure}[htb]
\centering
\includegraphics[width=.5 \textwidth]{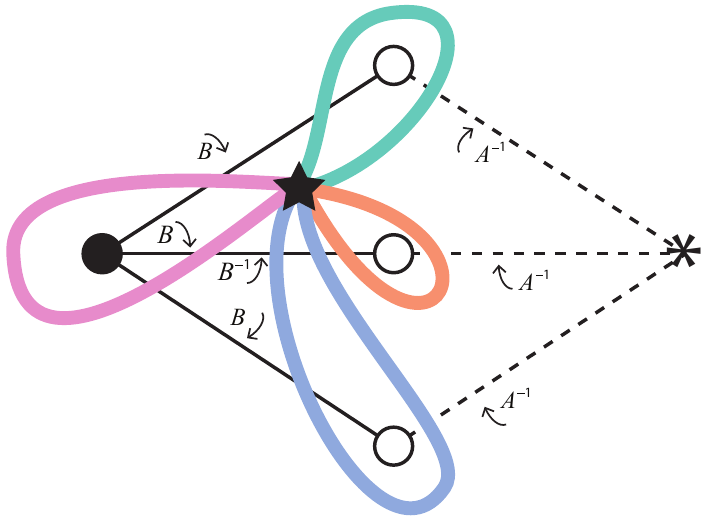}
\caption{Finding generators for the group associated to the 3-star}
\label{fig:3-star_with_loops}
\end{figure}

Using the same process for the graph in Figure \ref{fig:subdivide_graph}, we find the group associated to this graph:
\[
\Gamma = \left< B^{-1}A^2B, \ B^2, \ ABA^{-1}, \ A^3, A^{-1}B^2A, \ B^{-1}AB^{-1}A \right>.
\]
Notice that we have a choice of which vertex or face center to omit when forming loops; in Figure \ref{fig:subdivide_graph} it is natural to avoid the white vertex in the upper-right because it has the highest degree.

\begin{figure}[htb]
\centering
\includegraphics[width=.5 \textwidth]{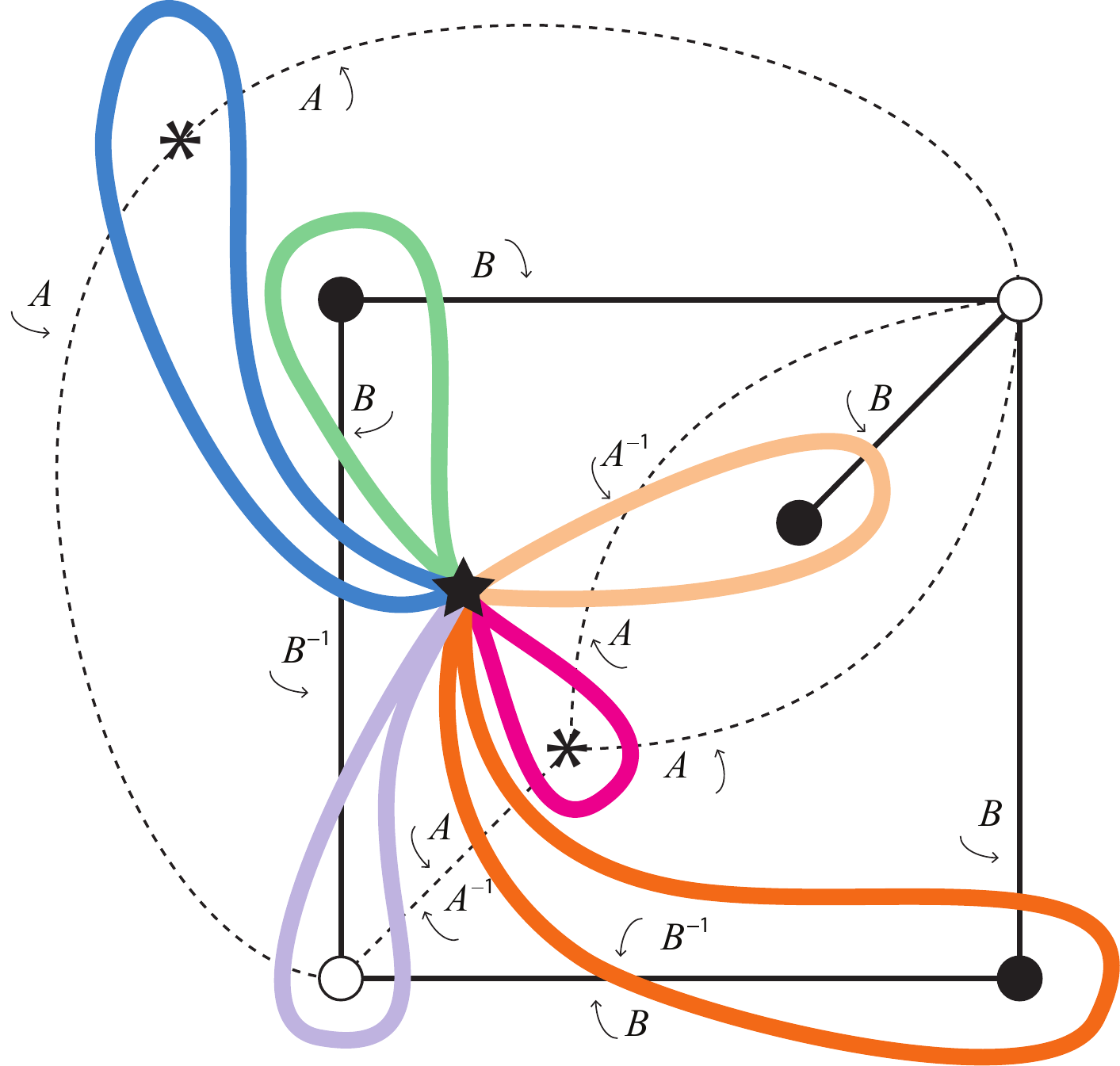}
\caption{Finding generators for the graph in Figure \ref{fig:subdivide_graph}.}
\label{fig:sphere_with_loops}
\end{figure}

We next look at the graph on a torus from Figure \ref{fig:subdivide_graph_torus}. In Figure \ref{fig:torus_with_loops} we see three loops around cusps, and two loops (vertical and horizontal) which generate the fundamental group of a torus. 
\begin{figure}[htb]
\centering
\includegraphics[width=.5 \textwidth]{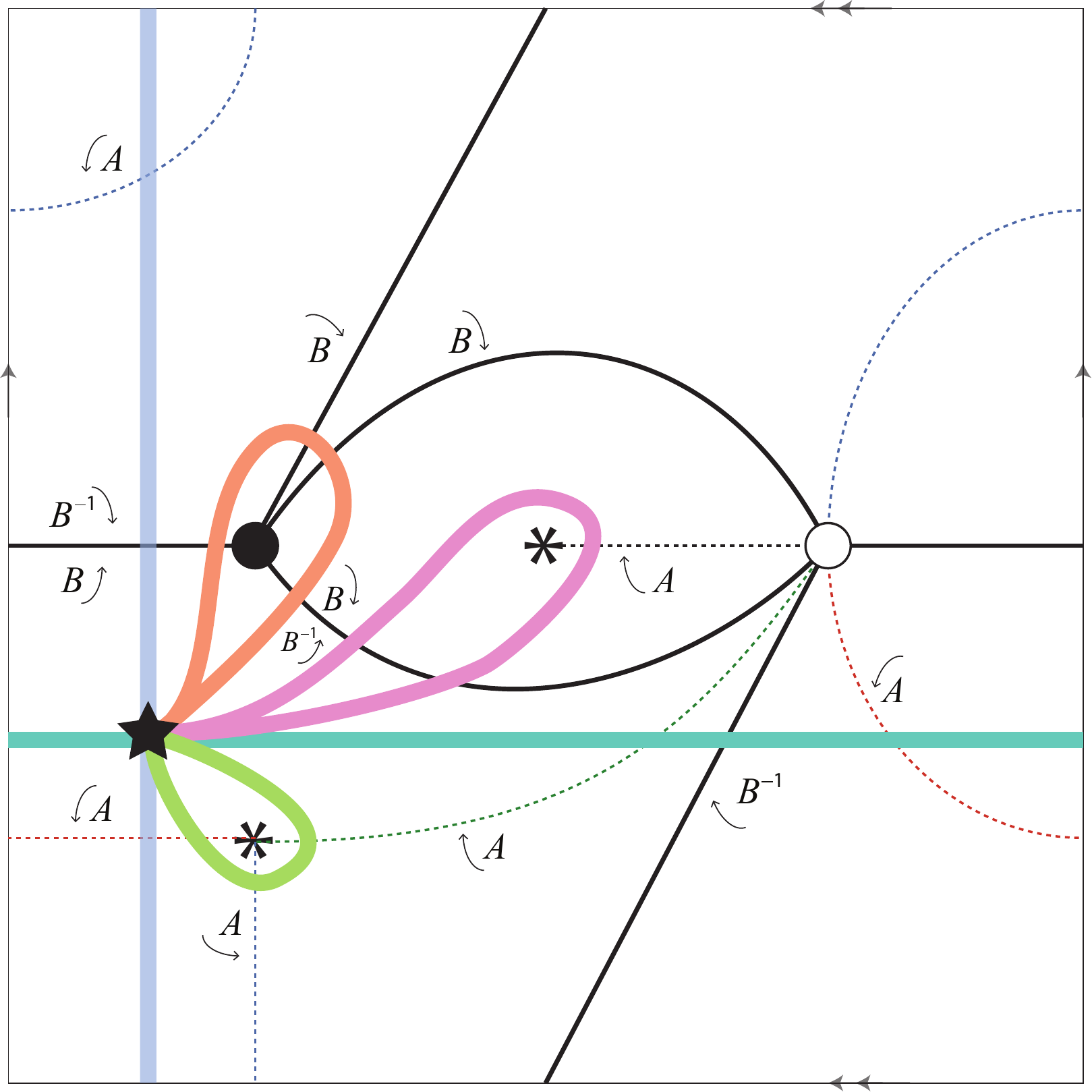}
\caption{Finding generators for the graph in Figure \ref{fig:subdivide_graph_torus}.}
\label{fig:torus_with_loops}
\end{figure}
The group associated to this graph is 
$
\Gamma = \left< B^4, \ BAB^{-1}, \ A^3, \ B^{-1}A^2, \ AB^{-1}A \right>.
$

For another example, we use the graph on a surface of genus 2 pictured in Figure \ref{fig:level12example}.
\begin{figure}[htb]
\centering
\includegraphics[width=.5 \textwidth]{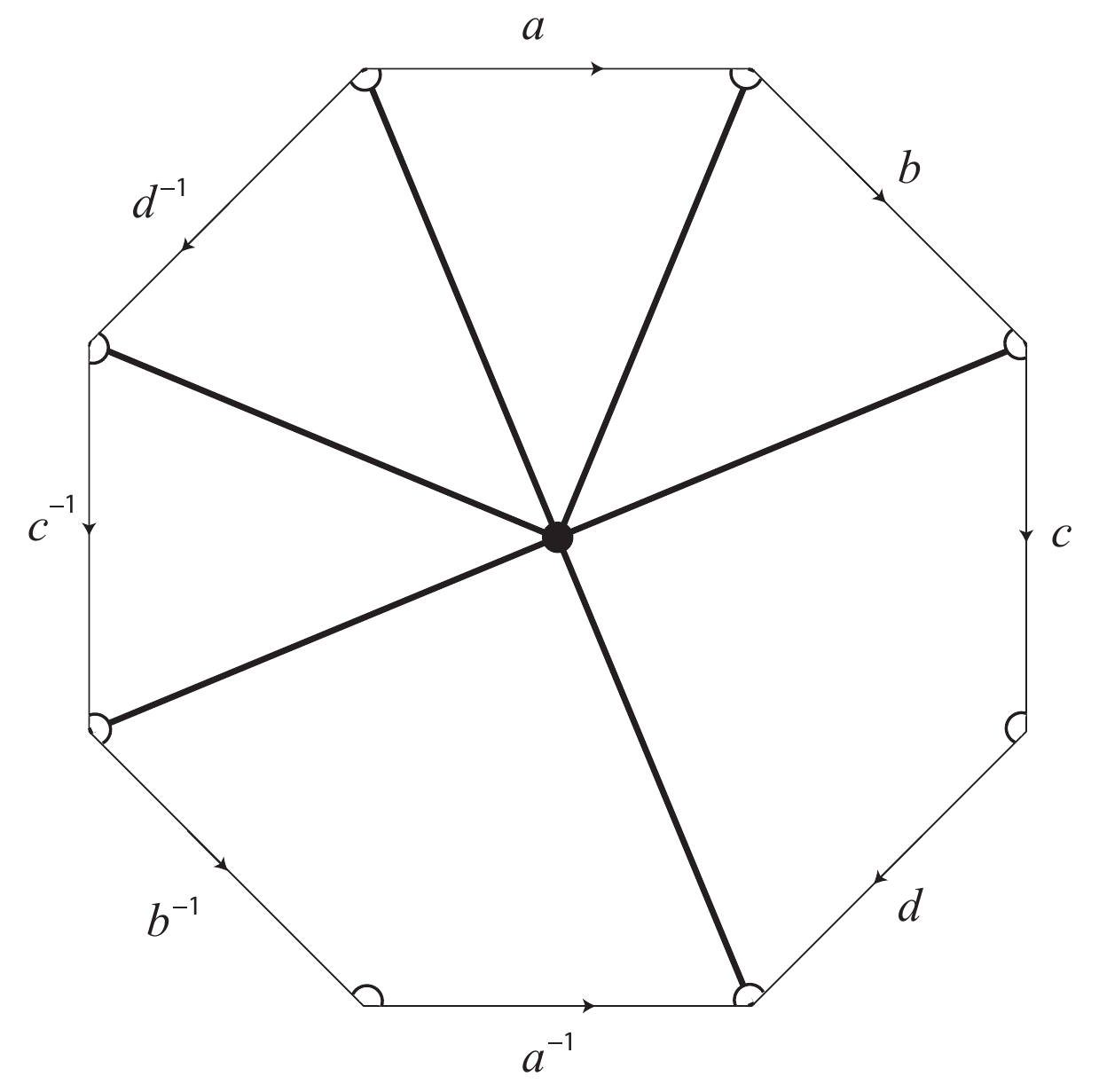}
\caption{A graph of level 12 on a surface of genus 2.}
\label{fig:level12example}
\end{figure}
Its $\Gamma(2)$-tiling is shown in Figure \ref{fig:level12example_with_generators}, along with loops for generators. The labels on the tiles are not shown explicitely, though the conventions are the same. The group associated to this graph is 
$
\Gamma = \left< B^6, \ B^{-1}A^3B, \ A^3, \ B^2A^{-1}B, \ BAB, \ AB^2, \ B^3A \right>.
$
\begin{figure}[htb]
\centering
\includegraphics[width=.5 \textwidth]{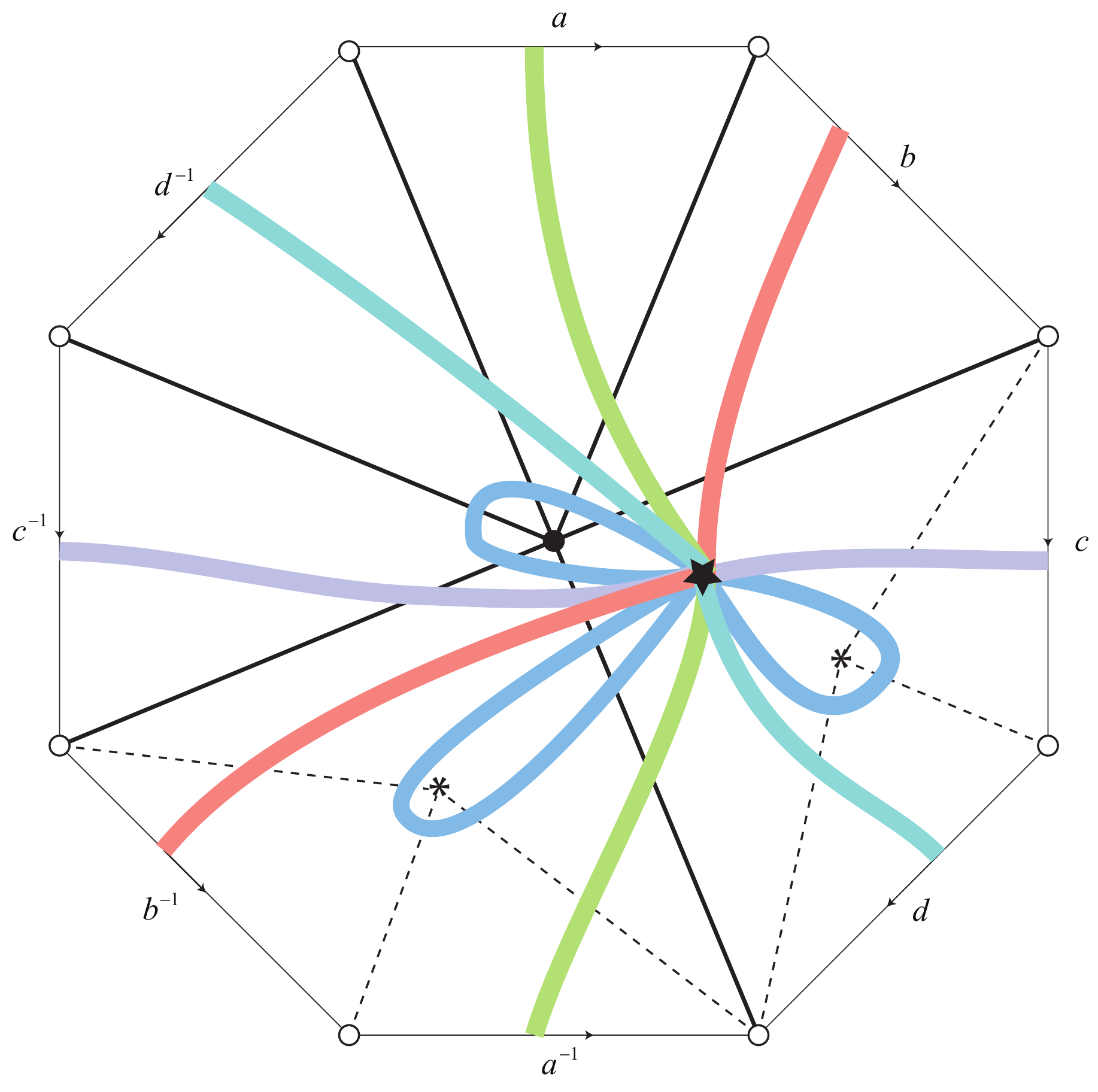}
\caption{Finding generators for the graph in Figure \ref{fig:level12example}}
\label{fig:level12example_with_generators}
\end{figure}

Having seen several examples, we now justify our prodcedure:
\begin{thm} A finite-index subgroup $\Gamma \subset \Gamma(2)$ can be generated by the words in $\mathcal L$ found from the loops drawn as above for its associated graph.
\label{loops_are_generators}
\end{thm}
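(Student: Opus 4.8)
The plan is to identify $\Gamma$ with the fundamental group of the underlying punctured surface $\Sigma'$ via covering space theory, and then observe that our loops are exactly a standard generating set for that fundamental group. Recall from the remark after Definition \ref{define_graph_domain} that gluing the fundamental domain of $\mathcal G$ produces $\Sigma'$, and that $\Sigma'$ is the quotient of the upper half-plane $\mathbb{H}$ by the action of $\Gamma$. Since $\mathbb{H}$ is simply connected and $\Gamma$ acts freely (as $\Gamma \subset \Gamma(2)$ acts freely on $\mathbb{H}$), the quotient map $\mathbb{H} \to \Sigma'$ is a universal cover, and so $\pi_1(\Sigma', \star) \cong \Gamma$, where the isomorphism sends a loop class to the unique deck transformation carrying a chosen lift $p$ of $\star$ to the endpoint of the lifted path. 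This is precisely the correspondence between loops and group elements already established in the proof of Theorem \ref{loops_are_elements}: a path on $\mathcal G$ with associated word $\gamma$ lifts to a path in $\mathbb{H}$ from $p$ to $\gamma(p)$, and the path is a loop iff $\gamma \in \Gamma$.

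With this identification in hand, the theorem reduces to the claim that the collection $\mathcal L$ of loops we drew generates $\pi_1(\Sigma', \star)$. For this I would appeal to the standard computation of the fundamental group of a compact orientable surface of genus $g$ with $k$ punctures: it is free of rank $2g + k - 1$, generated by $2g$ loops coming from the handles together with $k-1$ of the $k$ loops encircling the punctures (the last puncture-loop being a product of the others and the commutators of the handle generators). In our situation the punctures of $\Sigma'$ are exactly the cusps in the domain, i.e. the images of $0$, $\pm 1$, and $\infty$ — in other words the black vertices, white vertices, and face centers of $\mathcal G$ — so $k = v + f$. This is exactly why we take loops around all but one of the vertices and face centers (giving $v + f - 1$ loops) plus $2g$ loops for the handles, matching the number prescribed in Section \ref{generators_from_graph}. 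One must check that the specific loops we drew on the $\Gamma(2)$-tiling are honestly free homotopic to this standard generating set: each puncture-loop is a small simple loop encircling exactly one cusp and no other, based at $\star$ via an arc transverse to the tiling, and the handle loops are the usual $a_i, b_i$ curves; since $\Sigma'$ deformation retracts onto a wedge of $v+f-1+2g$ circles realized this way, these classes generate.

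The main obstacle — and the step I would spend the most care on — is the bookkeeping that the path-to-word translation of Section \ref{relabel_graph} is a genuine homomorphism $\pi_1(\Sigma', \star) \to \Gamma$ compatible with the deck-transformation isomorphism, rather than merely a set map. Concretely: if loops $\ell_1, \ell_2$ have associated words $\gamma_1, \gamma_2$ (read by the left-multiplication convention), then the concatenation $\ell_1 * \ell_2$ has associated word $\gamma_1 \gamma_2$, and homotopic loops give the same word. This follows from the lifting picture — traversing $\ell_2$ after $\ell_1$ lifts, starting from $p$, to the $\ell_1$-lift followed by the $\gamma_1$-translate of the $\ell_2$-lift, landing at $\gamma_1\gamma_2(p)$ — but it requires being careful about base points and the fixed isotopy to the standard tiling. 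Once this homomorphism property and the surjectivity of $\pi_1(\Sigma',\star)$ onto $\Gamma$ are in place, the conclusion is immediate: the generators of $\pi_1(\Sigma', \star)$ map onto a generating set of $\Gamma$, and that generating set is exactly $\mathcal L$ by Theorem \ref{loops_are_elements} applied loop by loop. I would close by remarking that this also re-proves that $\Gamma$ is free of the expected rank, consistent with the free generation of $\Gamma(2)$ by $A$ and $B$.
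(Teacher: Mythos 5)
Your proposal is correct and follows essentially the same route as the paper's proof: both identify $\Gamma$ with $\pi_1(\Sigma',\star)$ via the free action of $\Gamma$ on the upper half-plane, invoke the standard fact that this fundamental group is free on $2g+r-1$ loop classes of the type drawn, and note that concatenation of loops corresponds to multiplication of the associated words. Your version is somewhat more explicit about the deck-transformation isomorphism and the homomorphism property of the path-to-word translation, but the argument is the same.
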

\begin{proof}
We see this by understanding the how the group $\Gamma$ is associated to a graph $\mathcal{G}$. First, refer to Section \ref{graph_to_group}: given a group $\Gamma$, we find the associated graph by first finding a fundamental domain for $\Gamma$, and then finding side-pairing transformations to form a punctured surface $\Sigma'$. Notice that the punctures are at the vertices and face centers of the graph.  The side-pairing transformations generate $\Gamma$, which is the fundamental group of this punctured surface because $\Sigma' \simeq \Gamma\backslash \mathbb{H}^+$, with $\Gamma$ acting freely on the upper-half plane $\mathbb{H}^+$.

We can also compute the fundamental group of $\Sigma'$ using algebraic topology. Given a surface $\Sigma'$ of genus $g$ with $r$ punctures, $\pi_1(\Sigma')$ is a free group on $2g+r-1$ generators. The generators are homotopy classes of loops; $2g$ for the genus of the surface, and loops around all but one of the punctures. These are the loops from which we formed our collection of words $\mathcal L$, each of which is associated to an element of $\Gamma$ as in Theorem \ref{loops_are_elements}. The operation of concatenation on the loops is compatible with the operation of multiplying the group elements, so the group generated by $\mathcal L$ is isomorphic to the fundamental group of the surface.  

Thus, though the generators we have found in $\mathcal L$ might not directly correspond to side-pairing transformations of a fundamental domain for $\Gamma$, they still generate the fundamental group $\Gamma$ of the punctured surface.
\end{proof}

\section{Using graphs to determine congruence}
\label{congruence_from_graphs}

Suppose we have a graph $\mathcal{G}_1$ corresponding to a finite-index subgroup $\Gamma_1$ of $\Gamma(2)$. Now suppose we have another finite-index subgroup $\Gamma_2 \subset \Gamma(2)$ for which we know generators in terms of $A$ and $B$. In this section we will show how to determine whether $\Gamma_2 \subset \Gamma_1$. One useful application of this result is to determine whether a group is congruence: if $\Gamma_1$ has level $2n$ and we know generators for $\Gamma(2n)$, we can determine whether $\Gamma(2n) \subset \Gamma_1$. 

The procedure is as follows: find a $\Gamma(2)$-tiling for $\mathcal{G}_1$ as we did in Section 
\ref{relabel_graph}. Let $\gamma$ be an element of $\Gamma_2$ expressed as a word in $A^ {\pm 1}$ and $B^{\pm 1}$, and find its associated path on $\calG_1$. By applying Theorem \ref{loops_are_elements} we can determine whether $\gamma$ is in $\Gamma_1$ by seeing if the path forms a loop. By testing the elements of a generating set for $\Gamma_2$, we can determine whether or not $\Gamma_2 \subset \Gamma_1$.

We will look at some examples. Consider the graph in Figure \ref{fig:gamma06_graph}. This graph corresponds to $\Gamma_0(6) \cap \Gamma(2)$, which we know contains $\Gamma(6)$. If we were handed the graph out of context we could first compute that its level is 6 by finding the least common multiple of the degrees of its vertices and faces as we did in Section \ref{section_graphs}. Next we check that the corresponding subgroup is congruence by testing a set of generators for $\Gamma(6)$ to see if the associated paths form loops on our graph. The following generating set for $\Gamma(6)$ was found by constructing a fundamental domain: 
\[
\begin{matrix} A^3, \ B^3, \ ABA^{-2}B^{-2}, \ ABAB^{-2}, \ A^2B^2A^{-1}B^{-1}, \ A^2B^2A^2B^{-1}, \ A^2BA^{-2}B^{-2}
A^{-1}, \ A^2B^3A^{-2}, 
\\ A^2BAB^{-2}A^{-1}, \ AB^{-3}A^{-1}, \ A^2BA^{-1}B^{-1}AB^{-1}, \ ABA^{-1}BAB^{-1}, \ A^2BA^{-1}BAB^{-1}A^{-1}.
\end{matrix}
\]
In Figure \ref{fig:gamma_06_tiled} we see the path corresponding to the generator $ABA^{-2}B^{-2}$, which forms a loop. By checking all thirteen of the above generators, we can confirm that the graph represents a congruence subgroup. 
\begin{figure}[htb]
\centering
\includegraphics[width=.6 \textwidth]{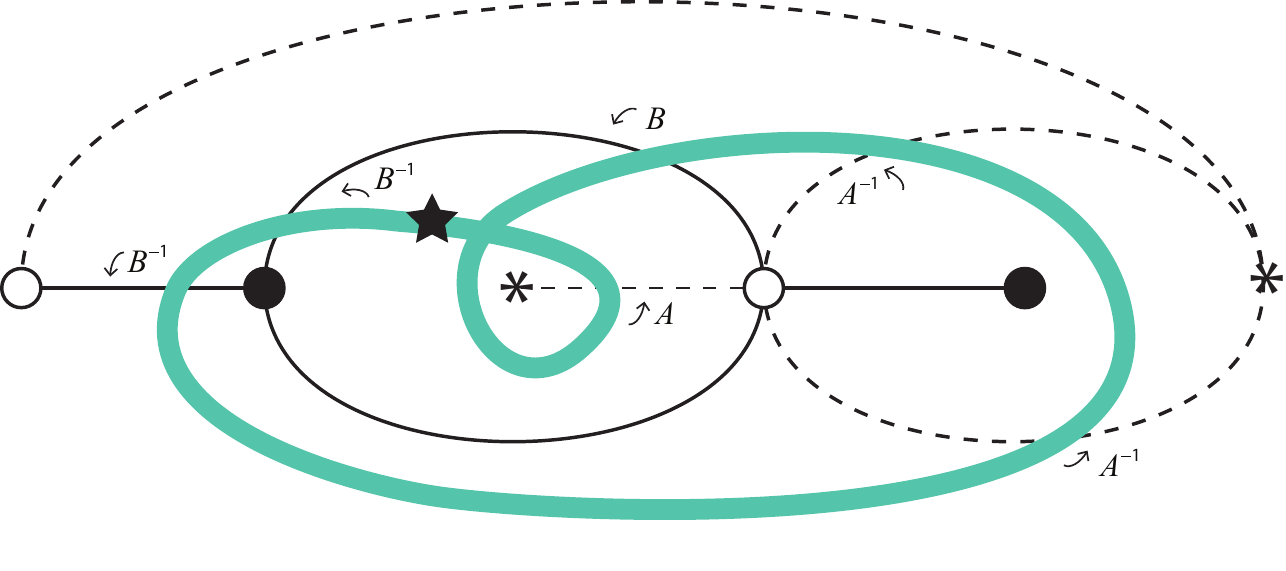}
\caption{The path $ABA^{-2}B^{-2}$ on the graph for $\Gamma_0(6) \cap \Gamma(2)$.}
\label{fig:gamma_06_tiled}
\end{figure}

If a graph of level $2n$ represents a noncongruence subgroup it suffices to find any element of $\Gamma(2n)$ which violates the criteria. As an example, consider the 3-star, which is also level 6, and thus if its associated $\Gamma$ is to be congruence it must contain $\Gamma(6)$. This graph does not violate the necessary conditions in Larcher's statements in Section \ref{larchers_results}, so we cannot immediately determine whether or not the subgroup is noncongruence. We see in Figure \ref{fig:3-star_noncongruence} the path corresponding to $ABA^{-2}B^{-2}$. Unlike the above example, this path does not form a loop. We conclude that the group $\Gamma = \left< BA^{-1}, \ A^{-1}B, \ B^{-1}A^{-1}B^2, \ B^{-3} \right>$ is a noncongruence subgroup of $\Gamma(2)$.
\begin{figure}[htb]
\centering
\includegraphics[width=.45 \textwidth]{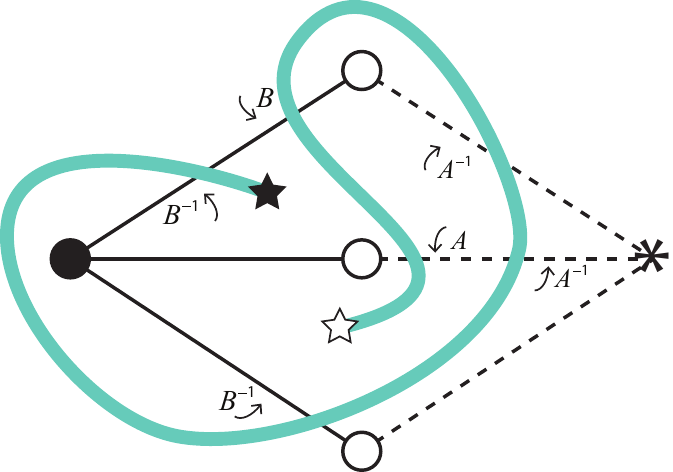}
\caption{The 3-star represents a noncongruence subgroup.}
\label{fig:3-star_noncongruence}
\end{figure}

For a final example, we use the graph on a surface of genus 2 pictured in Figure \ref{fig:level12example} to produce a noncongruence subgroup of Level 12. Its $\Gamma(2)$-tiling is shown in Figure \ref{fig:level12example_with_path}, along with a path for the element $BAB^3A^{-1}B^{-4} = {205 \, \, \, -24\choose 504 \, \, \, -59}$ in $\Gamma(12)$.
\begin{figure}[htb]
\centering
\includegraphics[width=.5 \textwidth]{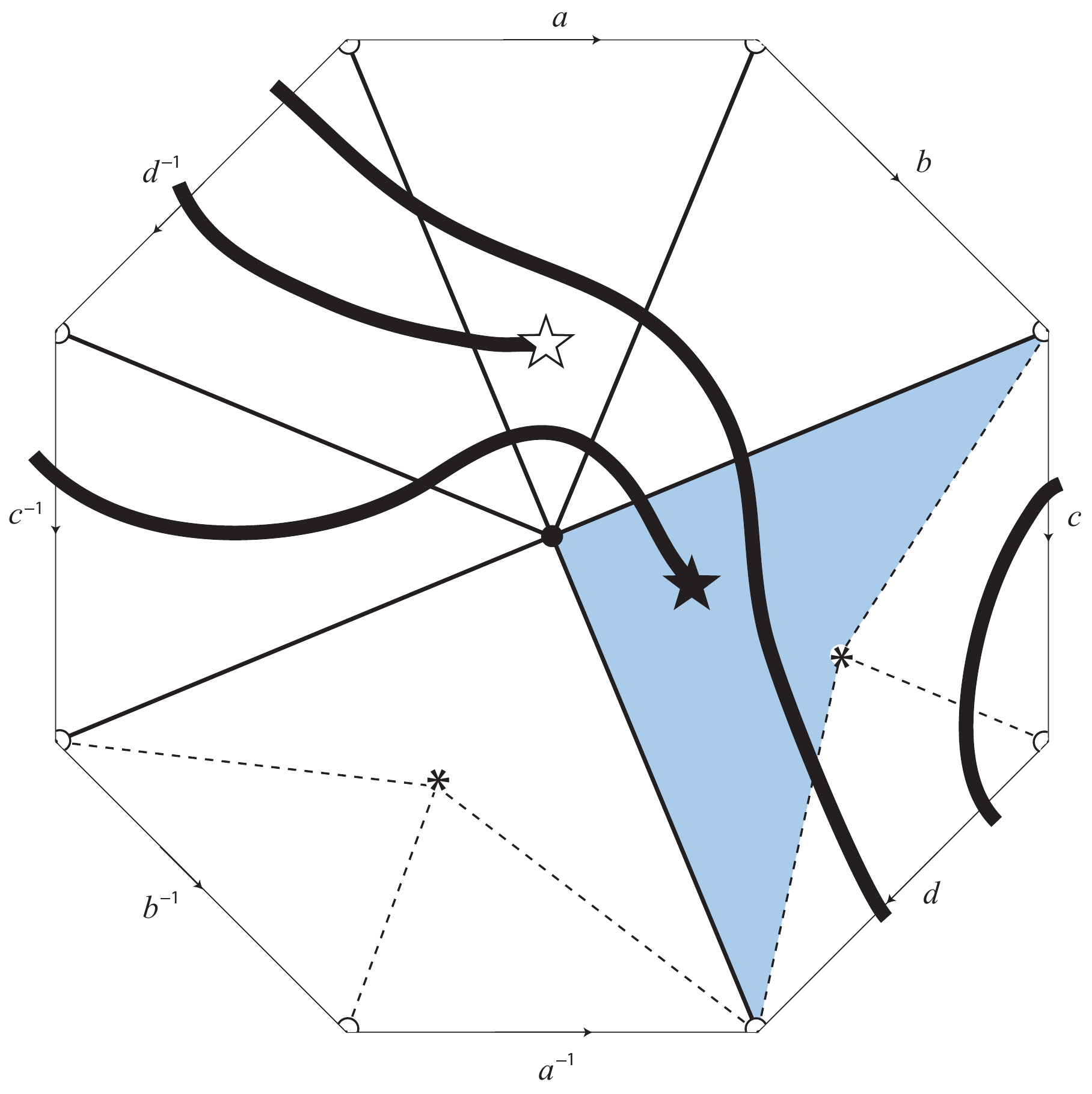}
\caption{The path $BAB^3A^{-1}B^{-4}$ on the graph in Figure \ref{fig:level12example}. }
\label{fig:level12example_with_path}
\end{figure}

Note that to prove a graph is congruence we need to work with a set of generators for $\Gamma(2n)$ given in terms of $A$ and $B$. While there are known ways to find generators for $\Gamma(n)$, none of the current methods we are aware of do so in terms of these generators for $\Gamma(2)$. In working toward this, we will see in a forthcoming paper an algorithm to produce the permutations for $\Gamma(2p)$ for $p$ prime. As we introduce more factors the generating sets become more complicated.


\end{document}